\renewcommand{\baselinestretch}{1.1}
\def\msquare{\mathord{\scalebox{0.5}[0.5]{\scalerel*{\Box}{\strut}}}}
\newtheorem{prethm}{{\bf Theorem}}
\renewcommand{\theprethm}{{\arabic{prethm}}}
\newenvironment{thm}{\begin{prethm}{\hspace{-0.5
               em}{\bf.}}}{\end{prethm}}
               \newtheorem{prelem}[prethm]{Lemma}
\newtheorem{precor}[prethm]{Corollary}
\newtheorem{preconj}[prethm]{Conjecture}
\newenvironment{conj}{\begin{preconj}{\hspace{-0.5
               em}{\bf.}}}{\end{preconj}}
\newtheorem{preremark}{{\bf Remark}}
\renewcommand{\thepreremark}{{\arabic{preremark}}}
\newtheorem{prepro}[prethm]{Proposition}
\newtheorem{preexample}{{\bf Example}}
\renewcommand{\thepreexample}{{\arabic{preexample}}}
\newtheorem{preproof}{{\bf Proof.}}
\renewcommand{\thepreproof}{}
\newenvironment{proof}[1]{\begin{preproof}{\rm
               #1}\hfill{$\Box$}}{\end{preproof}}
\newcommand{\Deg}{\mbox{deg}\,}
\title{\large \bf Star Coloring of the Cartesian Product of Cycles
\thanks
{{\it Key Words}: Vertex coloring, Star coloring, Cartesian products.}
\thanks {2010{ \it Mathematics Subject Classification}: 05C15, 05C38.
 }}
\author{{\normalsize
{\sc S. Akbari${}^{ \mathsf{a}}$},\,
 {\sc M. Chavooshi${}^{\mathsf{b}}$},\,
 {\sc M. Ghanbari${}^{\mathsf{a}}$},\,
 {\sc S. Taghian${}^{\mathsf{a}}$}
 }\vspace{3mm}
\\ {\footnotesize{}}\\{\footnotesize{${}^{\mathsf{a}}$\it
Department of
Mathematical Sciences, Sharif University of Technology, Tehran,
Iran
}}
{\footnotesize{}}\\{\footnotesize{${}^{\mathsf{b}}$\it
Department of
Mathematics, University of Houston, Houston, Texas,
USA}}
\thanks{{\it E-mail addresses}:  $\mathsf{s\_akbari@sharif.edu}$,
$\mathsf{malich@math.uh.edu}$,
$\mathsf{marghanbari@gmail.com}$,
$\mathsf{shadi.taghian@gmail.com}$. } }
\date{}
\begin{document}

\maketitle

\begin{abstract}
{\small
A proper vertex coloring of a graph $G$ is called a \textit{star coloring} if every two color classes induce a forest whose each component is a star, which means there is no bicolored $P_4$ in $G$. In this paper, we show that the Cartesian product of any two cycles, except $C_3 \square C_3$ and $C_3 \square C_5$, has a $5$-star coloring.
}
\end{abstract}


\section{Introduction}

\hspace{3mm}
All graphs considered in this paper are simple which means with no loops or multiple edges. Let $G$ be a graph. We denote the vertex set and the edge set of $G$ by $V(G)$ and $E(G)$, respectively.  In this paper $P_n$ and $C_n$ denote the path and the cycle with $n$ vertices, respectively.
The Cartesian product of two graphs $G_1$ and $G_2$ is a graph denoted by $G_1\square G_2$, with the vertex set $V(G_1\square G_2)=V(G_1) \times V(G_2)$ and $u = (u_1,u_2)$ is adjacent to $v = (v_1,v_2)$ whenever either ($u_1 = v_1$ and $u_2$ is adjacent to  $v_2$) or ($u_2=v_2$ and $u_1$ is adjacent to $v_1$). 

  A {\it proper coloring} of a graph $G$ is an assignment of $k$ colors to the vertices of $G$ such that no two adjacent vertices have the same color.
    A {\it star coloring} of a graph $G$ is a proper vertex coloring of $G$ such that no $P_4$ is bicolored. Indeed, a star coloring of a graph $G$ is a proper vertex coloring of $G$ such that
the union of every two color classes induces a forest whose each component is a star.
The {\it star chromatic number} of a graph $G$ is the minimum
number of colors which are necessary to color the vertices of $G$ such that $G$ has a star coloring and is denoted by $\chi_s(G)$. 
This definition was first introduced by Gr\"{u}nbaum in \cite{2}.  Later, it was well studied and has been widely investigated \cite{3,7}. An {\it acyclic coloring} is a proper vertex coloring such that the union of any two color classes induces a forest. If this coloring is not necessarily proper, then it is called the {\it acyclic improper coloring} of graphs, see \cite{acyclicimproper}. Star colorings are a strengthening of acyclic coloring, which is a proper vertex coloring in which every two color classes induce a forest (and there is no bicolored cycle). 

Albertson et al. \cite{9} showed that the star coloring problem is NP-complete even restricted to planar bipartite graphs. Fertin et. al. \cite{4} determined the exact value of the star chromatic number of different families of graphs such as trees, cycles, complete bipartite graphs, outer planar graphs and $2$-dimensional grids. They also provided bounds for the star chromatic numbers of other families of graphs, such as planar graphs, hypercubes, $d$-dimensional grids ($d\ge3$), graphs with bounded treewidth, and cubic graphs. However, the star coloring of planar graphs has attracted lots of attention. Gr\"{u}nbaum in \cite{2} found some relations between star coloring and acyclic coloring and provided an upper bound $2304$ for the star chromatic number of a planar graph. Later, this upper bound was reduced to $80$ by \cite{8} and to $30$ by \cite{6}. However, Albertson et. al. \cite{9} improved this bound to $20$. Regarding the lower bound, Fertin et. al. \cite{4} gave a planar graph with its star chromatic number $6$. Moreover, in \cite{2018}, the upper bound for the star chromatic number of graphs in terms of the maximum number of vertices that are pairwise adjacent in $G$ is given. 

It is not hard to see  that for every natural number $n\geq 3$,
$
\chi_s(P_n) = 3
$
and
		$$
		\chi_s(C_n) =
		\begin{cases}
		4       & \quad n = 5\\
		3  & \quad \text{otherwise}.
		\end{cases}
		$$
	
It was proved that for natural numbers $m$ and $n$, $min \{ m,n \} \geq 4$, $\chi_s(P_m \square P_n) = 5$, see \cite{2016}. Moreover, in \cite{2016} it is proved that for every two natural numbers  $m,n\geq3$, $4\leq\chi_s(P_m \square C_n) \leq 5$.
The following conjecture was proposed in \cite{3}.

	\begin{conj}\label{conj}
Let $m,n \geq 3$ be two natural numbers. Then
		$$
		\chi_s(C_m \square C_n) =
		\begin{cases}
		6       & \quad m=n=3 \quad or \quad m=3,n = 5\\
		5  & \quad \text{otherwise}.
		\end{cases}
		$$
	\end{conj}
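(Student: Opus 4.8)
The abstract's claim --- the substantive direction of Conjecture~1 --- is that $C_m \square C_n$ admits a $5$-star coloring whenever $(m,n)$ is not one of the two exceptional pairs, and the plan is to produce such colorings explicitly. I view $C_m \square C_n$ as the torus $\mathbb{Z}_m \times \mathbb{Z}_n$ with $(i,j)$ adjacent to $(i\pm 1,j)$ and $(i,j\pm 1)$; a $5$-star coloring is then nothing but a $5$-coloring of the integer grid that is $m$-periodic in the first coordinate, $n$-periodic in the second, proper, and free of bicolored $P_4$'s. Since a properly $2$-colored $P_4$ is forced to be color-alternating $a,b,a,b$, the entire problem reduces to forbidding such alternating paths.

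The backbone is the diagonal linear coloring $c(i,j) = (i + 2j) \bmod 5$. First I would record that this is a star coloring of the infinite grid by an ``increment'' argument: the color change along each unit step is $+1$ for a right step, $-1$ for left, $+2$ for up, $-2$ for down, and the four values $\{+1,-1,+2,-2\} \equiv \{1,4,2,3\} \pmod 5$ are pairwise distinct. A bicolored $P_4$, being color-alternating, would have consecutive increments $\delta_1,\ \delta_2 = -\delta_1,\ \delta_3 = \delta_1$; but $\delta_2 = -\delta_1$ together with the distinctness forces the second step to be the exact reverse of the first, so the path revisits its starting vertex --- impossible. This $c$ descends to a well-defined coloring of $C_m \square C_n$ exactly when $5 \mid m$ and $5 \mid n$ (so that the color is consistent around both generating cycles), and there it is automatically a $5$-star coloring. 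This disposes of all such $(m,n)$ in one stroke.

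The remaining cases are where the real work lies, and they are the expected obstacle. A purely linear coloring $ai + bj$ cannot help when $5 \nmid m$ or $5 \nmid n$: propriety forces $a,b$ to be units modulo $5$, and consistency around the cycles then forces $5 \mid m$ and $5 \mid n$. So for the other residues one must patch. My plan is to tile the torus by diagonal blocks of width $5$ carrying the backbone coloring and to absorb the leftover residues $s = m \bmod 5$ and $r = n \bmod 5$ into a few ``repair'' strips (a horizontal strip of $s$ rows, a vertical strip of $r$ columns, and a corner block), choosing the colors in the strips so that across every seam adjacency stays proper and no new color-alternating $P_4$ is created. The hard part is precisely the seam analysis: near a boundary the clean increment argument breaks, so one must verify by hand, for each residue pair $(s,r)$, that no bicolored $P_4$ crosses the junctions --- a finite but delicate case check. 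A convenient way to organize it is a transfer/column-transition viewpoint: let the vertices of an auxiliary digraph $H_m$ be the proper star colorings of a single column $C_m$, with an arc between two colorings whose superposition as adjacent columns is proper and bicolored-$P_4$-free; a $5$-star coloring of $C_m \square C_n$ is then a closed walk of length $n$ in $H_m$, and it suffices to exhibit inside $H_m$ short closed walks whose lengths generate every admissible $n$.

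Finally I would treat the genuinely small cases separately, since width-$5$ blocks do not fit when $m$ or $n$ is tiny: the products $C_3 \square C_n$, $C_4 \square C_n$ and the like need hand-built periodic patterns, and this is also where the two exceptions surface. For $C_3 \square C_3$ and $C_3 \square C_5$ one must additionally confirm that five colors do not suffice, a direct finite check; these are exactly the configurations so tightly wrapped that every attempted $5$-coloring is driven into a bicolored $P_4$, forcing $\chi_s = 6$.
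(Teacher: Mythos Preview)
Your diagonal coloring $c(i,j)=i+2j\pmod 5$ and the increment argument are correct, and the paper's $C_5\square C_5$ picture is precisely this coloring. But that backbone settles only the pairs with $5\mid m$ and $5\mid n$; everything else you defer to ``patching'' and ``hand-built periodic patterns'' that you never actually construct. That is the genuine gap. For $m\in\{3,4,7,11\}$ no $5$-wide diagonal block even fits in the short direction, so these are not repairs of a backbone but entirely new constructions, and they constitute almost all of the work. Your two-dimensional repair scheme (horizontal strip, vertical strip, corner block) also multiplies the seam interactions you would have to check, compared with what the paper actually does.

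The paper's route is organized differently and is concrete where yours is programmatic. For each small $m$ it draws two explicit $5$-star colorings of $C_m\square C_{k_1}$ and $C_m\square C_{k_2}$ whose first few columns coincide (for instance $k_1=4,\ k_2=7$ when $m=3$; $k_1=4,\ k_2=5$ when $m=4,5$; $k_1=3,\ k_2=4$ when $m=7$), so that horizontal concatenation keeps the star property across the seam; Sylvester's theorem on numerical semigroups then realizes every large $n$ as $\alpha k_1+\beta k_2$, and the finitely many stragglers are drawn individually. Once $m=3,\dots,11$ are in hand, any $m\ge 12$ is written as a combination of $4$ and $5$ and the already-built $C_4\square C_n$ and $C_5\square C_n$ blocks are stacked vertically (their rows match). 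Your transfer digraph $H_m$ is a fair abstraction of exactly this --- the paper is exhibiting short closed walks in $H_m$ of coprime lengths --- but the content lies in producing those walks explicitly, which you have not done. Finally, the lower bound $\chi_s(C_3\square C_5)>5$ is not dismissed as ``a direct finite check'': the paper gives a pigeonhole argument on how the five colors can be distributed among the five $K_3$-columns, and you would need something comparable.
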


Recently, in \cite{2016}, this conjecture was proved for $m = n = 3$ and also for every two natural numbers $m,n\geq                 30$.  In this paper, we prove this conjecture for all cases. 
 
\section{Results}

In this section, we would like to show that the Cartesian product of any two cycles, except $C_3 \square C_3$ and $C_3 \square C_5$, has a $5$-star coloring.
It was proved that for every two graphs $G$ and $H$, $\chi_s(G \square H) \leq \chi_s(G) \chi_s(H)$, see \cite{4}. 
The following results were proved on star coloring of the Cartesian product of two graphs.

\begin{thm}\rm{\cite{4}} For $i=2,3$, $\chi_s(P_2\square P_i) = i+1$ and for every two natural numbers $m$ and $n$, $  \min \{ m,n \}\ge 4$,
  $\chi_s(P_2\square P_n) = \chi_s(P_3\square  P_n) = \chi_s(P_3\square  P_3) = 4$ and
   $\chi_s(P_m \square P_n) = 5$. 
\end{thm}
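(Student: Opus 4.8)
The plan is to prove each equality by matching an explicit coloring (upper bound) against a nonexistence argument (lower bound), and to funnel all the lower bounds through the fact that the star chromatic number is \emph{subgraph-monotone}: if $H\subseteq G$ then $\chi_s(H)\le\chi_s(G)$, because restricting a star coloring of $G$ to $H$ is again a star coloring (every $P_4$ of $H$ is a $P_4$ of $G$). Since $P_2\square P_3$ is a subgraph of every $P_2\square P_n$ and $P_3\square P_n$ with $n\ge3$ and of $P_3\square P_3$, and $P_4\square P_4$ is a subgraph of every $P_m\square P_n$ with $m,n\ge4$, all the lower bounds collapse to two finite statements: that the $2\times3$ grid has no $3$-star coloring, and that the $4\times4$ grid has no $4$-star coloring. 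The remaining base value $\chi_s(P_2\square P_2)=\chi_s(C_4)=3$ is already recorded in the introduction.

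For the general upper bound I would label the vertices of the grid by $(i,j)$ and set
\[ c(i,j)\equiv i+2j \pmod 5 . \]
Adjacent vertices differ by a single unit vector, so their labels differ by $\pm1$ or $\pm2$ modulo $5$ and $c$ is proper. Any two vertices at distance two along a path differ by a sum of two unit vectors, that is, by one of $(\pm2,0)$, $(0,\pm2)$, $(\pm1,\pm1)$; the corresponding change in $i+2j$ is $\pm2$, $\pm4$, or one of $\pm1,\pm3$, none divisible by $5$. A bicolored $P_4=v_1v_2v_3v_4$ would force $c(v_1)=c(v_3)$ with $v_1,v_3$ at path-distance two, which is now impossible, so $c$ is a $5$-star coloring and $\chi_s(P_m\square P_n)\le5$ for all $m,n$.

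For the thin strips I would use $4$-periodic patterns. Color the two rows of $P_2\square P_n$ by the repeating blocks $(1,3,2,4)$ and $(2,4,1,3)$; for $P_3\square P_n$ give rows $1$ and $3$ the first block and row $2$ the second. In each pattern every four consecutive entries of a row are distinct, and an entry of any row never coincides with the entry of an adjacent row one column to its left or right. The first property rules out horizontal bicolored $P_4$'s and the second rules out all the remaining ones, which necessarily use a vertical edge; both are verified over a single period. Deleting columns only removes $P_4$'s, so these patterns certify $\chi_s(P_2\square P_n)\le4$ and $\chi_s(P_3\square P_n)\le4$ for every $n$, and truncation to three columns gives $\chi_s(P_3\square P_3)\le4$ and $\chi_s(P_2\square P_3)\le4$. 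Together with the two lower bounds this pins every value: $\chi_s(P_2\square P_2)=3$, the quoted $4$'s, and $\chi_s(P_m\square P_n)=5$ for $m,n\ge4$.

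What is left is the two finite checks, and both are handled the same way: fix the color of one vertex, propagate the constraints imposed by properness and by the ban on bicolored $P_4$'s, and branch only where a real choice remains, pruning with the dihedral symmetry of the grid and with color permutations. The $2\times3$ case collapses almost immediately---once the top row is colored the bottom row is essentially forced and every completion creates a bicolored $P_4$. The genuine obstacle is the $4\times4$ case: the sixteen vertices leave enough slack that the case tree is sizeable, and although each branch is elementary, organizing them so that the analysis is both exhaustive and humanly checkable (or confirming it by machine) is the delicate heart of the argument. Once the $4\times4$ grid is shown to need five colors, monotonicity promotes this to $\chi_s(P_m\square P_n)\ge5$ for all $m,n\ge4$, matching the construction above.
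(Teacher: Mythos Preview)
The paper does not prove this statement at all: it is quoted verbatim from \cite{4} as background, with no accompanying argument. So there is no ``paper's own proof'' to compare your attempt against.

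On its own merits, your plan is sound and essentially the standard one. The coloring $c(i,j)\equiv i+2j\pmod 5$ really is a star $5$-coloring of every grid (your distance-two argument is clean), and the $4$-periodic strip patterns work for $P_2\square P_n$ and $P_3\square P_n$; one small point you glossed over is the $P_4$ in $P_3\square P_n$ that uses two consecutive vertical edges, where $v_1$ and $v_3$ lie in rows $1$ and $3$ of the same column and therefore \emph{do} share a color---you then need your diagonal property to kill $c(v_2)=c(v_4)$, which it does, but this case is not covered by the two bullet points as you stated them. The reduction of all lower bounds via subgraph monotonicity to the two finite obstructions $P_2\square P_3$ and $P_4\square P_4$ is correct.

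The genuine gap is that you do not actually carry out the $4\times4$ lower bound; you describe a branch-and-prune strategy and call it ``the delicate heart of the argument,'' but leave it undone. That case analysis is real work (and is the reason the original reference is cited rather than reproved), so as written your proposal is a correct outline rather than a complete proof.
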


\begin{thm}\rm{\cite{2016}}
  Let $n \ge  3$ be a natural number, then $$
		\chi_s(P_3 \square C_n) =
		\begin{cases}
		4       & \,n\equiv\,0\,(mod\,\,2)\\
		5  & \,n\equiv\,1\,(mod\,\,2).
		\end{cases}
		$$
   
\end{thm}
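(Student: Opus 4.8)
The plan is to fix coordinates and treat the two bounds separately in each parity class. Write the vertices of $G = P_3 \square C_n$ as pairs $(i,j)$ with $i \in \{0,1,2\}$ the $P_3$-coordinate and $j \in \mathbb{Z}_n$ the $C_n$-coordinate, so that each row $R_i = \{(i,j) : j \in \mathbb{Z}_n\}$ is an induced $C_n$ and each column $\{(0,j),(1,j),(2,j)\}$ is an induced $P_3$. For the universal lower bound $\chi_s(G) \ge 4$ I would use monotonicity under subgraphs: $P_3$ is an induced subgraph of $C_n$ for every $n \ge 3$, so $P_3 \square P_3$ is an induced subgraph of $G$, and any star coloring of $G$ restricts to one of $P_3 \square P_3$; since Theorem~1 gives $\chi_s(P_3 \square P_3) = 4$, we get $\chi_s(G) \ge 4$ for all $n$.

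For the upper bounds I would exhibit explicit periodic colorings and check that none contains a bicolored $P_4$. When $4 \mid n$, repeating the four column-types $(1,3,1)$, $(2,4,2)$, $(3,1,3)$, $(4,2,4)$ in this cyclic order gives a $4$-star coloring: every column and every row is properly colored, and a short case check over the shapes a $P_4$ can take --- classified by its pattern of horizontal and vertical steps, a purely vertical $P_4$ being impossible since a column has only three vertices --- shows that each $P_4$ meets at least three colors. To cover the remaining even residue $n \equiv 2 \pmod 4$ I would splice a correcting block of boundedly many columns into this periodic pattern so that it closes up around the cycle; for odd $n$ the same periodic skeleton, with the fifth color used only on a bounded seam, yields a $5$-star coloring. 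In every case the verification is a finite check on one block together with its two boundaries, so I would present the blocks and relegate the $P_4$-checking to a table.

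The substantial part is the lower bound $\chi_s(G) \ge 5$ for odd $n$, that is, ruling out any $4$-star coloring. Here I would run a transfer-matrix argument on columns. Let a \emph{state} record a window of three consecutive columns, and join two states by a transition when appending the new column keeps the resulting four-column window properly colored and free of a bicolored $P_4$; three columns suffice because the longest $P_4$ occupies four consecutive columns of a single row, while every bent $P_4$ fits in three. A $4$-star coloring of $G$ is then exactly a closed walk of length $n$ in the resulting finite digraph $D$ (small odd $n$ being checked by hand). I would then prove that $D$ is bipartite --- equivalently, that it carries a $\mathbb{Z}_2$-valued potential flipping along every transition --- so that $D$ has only even closed walks and hence none of odd length $n$, giving $\chi_s(G) \ge 5$. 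The engine for locating this potential is the local fact that a middle-row vertex has degree $4$ but only three admissible colors, so two of its neighbours are forced to repeat a color; the star condition then bars that color from the far neighbours of the repeated pair, and tracking how this forced repetition migrates from column to column is what fixes the parity.

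The main obstacle is precisely establishing the bipartiteness of $D$ (equivalently, producing the $\mathbb{Z}_2$-potential). The naive guess --- that some fixed splitting of the four colors into two pairs alternates along each row --- is false, because a column may legally carry different colors in its top and bottom rows, so the correct potential must read a local configuration of two columns rather than a single color. I expect this to come down to a finite but delicate classification of the legal states and their transitions, and it is the step on which I would spend the most care; by comparison the constructions and the subgraph lower bound are routine.
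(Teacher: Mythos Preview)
The paper does not prove this theorem. It is quoted from \cite{2016} (Han, Shao, Zhu, Li, Deng) as background, with no argument supplied; so there is no ``paper's own proof'' to compare your proposal against.

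As a standalone write-up, your plan is reasonable in outline but is not yet a proof. The lower bound $\chi_s\ge 4$ via the induced $P_3\square P_3$ and the explicit $4$-coloring for $4\mid n$ are fine (modulo the promised finite $P_4$-check). However, the heart of the result---that no $4$-star coloring exists when $n$ is odd---is not established: you reduce it to showing that a certain transition digraph $D$ on three-column windows is bipartite, and then explicitly flag this bipartiteness as ``the main obstacle'' that you ``expect'' to resolve via a classification you have not carried out. Until you actually produce the $\mathbb{Z}_2$-potential (or otherwise prove every closed walk in $D$ has even length), the odd-$n$ lower bound is an assertion, not a proof. Similarly, the even case $n\equiv 2\pmod 4$ and the odd upper bound are handled by ``splicing a correcting block'' and ``a bounded seam'' that are described but not exhibited; for a complete argument you must display those blocks and verify the $P_4$ condition across the seams.
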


\begin{thm}\rm{\cite{fertin}}\label{fertin}
 For natural numbers  $m,n\ge 3$, $5\le \chi_s(C_m \square C_n)\le 6$.
\end{thm}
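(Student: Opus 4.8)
The plan is to prove the two bounds by separate methods.

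\textbf{Lower bound.} I would use that $\chi_s$ is monotone under taking subgraphs: a star coloring of $G$ restricts to a star coloring of any subgraph $H$, so $\chi_s(H)\le\chi_s(G)$. If $\min\{m,n\}\ge 4$, then $P_4\square P_4$ is a subgraph of $C_m\square C_n$ and $\chi_s(P_4\square P_4)=5$ by Theorem 1, so $\chi_s(C_m\square C_n)\ge 5$. If $\min\{m,n\}=3$, say $m=3$, then $P_3\square C_n$ is a subgraph; when $n$ is odd, $\chi_s(P_3\square C_n)=5$ by Theorem 2 and we are again done (note $n=3$ is odd, so $C_3\square C_3$ is covered). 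The only family not reached this way is $C_3\square C_n$ with $n$ even: an even cycle contains no shorter cycle, and the quoted results only give $\chi_s(C_3\square P_n)\ge 4$.

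For $C_3\square C_n$ with $n$ even I would argue by contradiction from a hypothetical $4$-star-coloring. The structural key is that each triangle fiber $\{(a,j),(b,j),(c,j)\}$ is a $K_3$, hence rainbow, so every column uses exactly three colors and omits one. Two local conditions then constrain the coloring: between consecutive columns the colour map may not transpose two rows, for otherwise the $4$-cycle $(x,j),(x,j{+}1),(y,j{+}1),(y,j)$ is a bicolored $P_4$; and along each row, a copy of $C_n$, no pattern of the form $\alpha\beta\alpha\beta$ may appear. Propagating the omitted-colour function together with these no-transposition conditions around the even cycle should force a bicolored $P_4$. I expect this propagation to be the main obstacle on the lower-bound side.

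\textbf{Upper bound.} The product bound $\chi_s(G\square H)\le\chi_s(G)\chi_s(H)$ only yields $9$, so I would give an explicit coloring instead. On the infinite grid, $c(i,j)=(i+2j)\bmod 5$ is a star coloring: a horizontal edge changes the colour by $\pm1$ and a vertical edge by $\pm2$ modulo $5$, so the five closed-neighbourhood colours $\{i+2j+k: -2\le k\le 2\}$ exhaust $\mathbb{Z}_5$; in particular any two distinct neighbours of a vertex get different colours, which already rules out every bicolored $P_4$. The obstruction on the torus is that $c$ is consistent only when $5\mid m$ and $5\mid n$, and even when it remains proper the closed-neighbourhood property fails across the wrap-around seams.

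To repair this I would write $m=5q+r$ and $n=5q'+r'$, keep the pattern $c(i,j)=(i+2j)\bmod 5$ on the $5q\times 5q'$ bulk, and recolor the L-shaped band of widths $r$ and $r'$ near the two seams using the sixth colour, keeping each affected row and column locally rainbow so that properness is restored and no short bicolored path is created. Since the widths $r,r'$ depend only on $m,n\bmod 5$, a finite case analysis covers all tori, and the single spare colour provides the freedom needed to break the seams. Verifying that no bicolored $P_4$ runs from the bulk across the repaired seam and back is where the real work lies; I expect this seam analysis to be the principal obstacle for the upper bound.
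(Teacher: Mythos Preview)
The paper does not prove this statement; it is quoted from the Fertin--Raspaud--Reed technical report and used throughout as a black box, so there is no in-paper argument to compare your proposal against. Your plan therefore has to stand on its own, and both halves contain gaps that you yourself flag but do not close.

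For the lower bound, the subgraph reductions via $P_4\square P_4$ and $P_3\square C_n$ with $n$ odd are correct, but the remaining case $C_3\square C_n$ with $n$ even is only a sketch. The ``no-transposition'' constraint between consecutive $K_3$-fibres is valid, yet it is far from the only obstruction: a bicolored $P_4$ can also arise as $(x,j),(y,j),(y,j{+}1),(z,j{+}1)$ with $x\neq z$, or bend through three consecutive columns, and these impose further conditions that your propagation idea does not capture. Nothing in what you wrote actually uses the parity of $n$ or the fact that the rows close up into cycles, so the promised contradiction has no visible source. This case needs either a full case analysis of how the omitted-colour sequence and the row-permutations can evolve around the cycle, or a different invariant altogether.

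For the upper bound, the diagonal pattern $c(i,j)=(i+2j)\bmod 5$ is indeed a $5$-star-colouring of $\mathbb{Z}^2$, and your rainbow-closed-neighbourhood observation is the cleanest way to see it. But the seam repair is the entire content of the argument and you give it only schematically. Two concrete obstacles: the two seams intersect in an $r\times r'$ corner block where the repair must be compatible with both wraps simultaneously, and a single spare colour leaves very little room there; and ``locally rainbow rows and columns'' does not by itself exclude bicolored $P_4$'s that zigzag between the bulk and the band. Without an explicit colouring of the band for each residue pair $(r,r')\in\{0,\dots,4\}^2$ and a verification of all $P_4$'s crossing the seams and the corner, the bound $\chi_s(C_m\square C_n)\le 6$ is not established.
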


\begin{thm}\rm{\cite{2016}}
 For natural numbers $m,n\ge 4$, $\chi_s(P_m \square C_n)=5$.
\end{thm}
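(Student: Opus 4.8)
The plan is to establish the two bounds separately. For the lower bound, I would note that $C_n$ contains $P_n$ as a spanning subgraph, so $P_m\square P_n$ is a subgraph of $P_m\square C_n$; since restricting a star coloring to a subgraph again yields a star coloring, $\chi_s$ is monotone under subgraphs, and Theorem~1 gives $\chi_s(P_m\square C_n)\ge\chi_s(P_m\square P_n)=5$ for all $m,n\ge4$. Everything then reduces to producing a star coloring with exactly $5$ colors.

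For the upper bound I would color in $\mathbb{Z}_5$ by a diagonal scheme. Write the vertices as $(i,j)$ with $i\in\{1,\dots,m\}$ on the path fiber and $j\in\mathbb{Z}_n$ on the cycle fiber, and set $c(i,j)=(i+s_j)\bmod 5$, where the column phases $s_j$ are determined by prescribed horizontal increments $b_j=s_{j+1}-s_j$. A short case analysis shows that such a coloring is a star coloring exactly when (a) every $b_j\in\{2,3\}$ and (b) no three cyclically consecutive increments form a pattern $x,x',x$ with $x\ne x'$ (equivalently every maximal constant run of the sequence $(b_0,\dots,b_{n-1})$ has length at least $2$). The reason is that a bicolored $P_4$ must have color increments $x_1,x_2,x_3$ with $x_2=-x_1$ and $x_3=x_1$; vertical steps change the color by $\pm1\in\{1,4\}$ and horizontal steps by $\pm b_j\in\{2,3\}$, so a reversal $x_2=-x_1$ cannot pass between the two types, a reversal between two vertical steps is a backtrack, and a reversal between two (hence codirectional) horizontal steps forces precisely the forbidden run pattern on consecutive $b_j$. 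Finally the coloring is well defined on $C_n$ iff $s_{j+n}=s_j$, i.e.\ $\sum_j b_j\equiv0\pmod 5$.

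This turns the construction into a purely combinatorial existence question: for each $n\ge4$, is there a cyclic $\{2,3\}$-sequence of length $n$, all of whose maximal runs have length $\ge2$, with $\sum_j b_j\equiv0\pmod 5$? Letting $t$ be the number of $3$'s, the sum is $2n+t$, so one needs $t\equiv3n\pmod 5$, and the run condition is met exactly for $t\in\{0,n\}\cup\{2,\dots,n-2\}$ (a single run of $3$'s together with a single run of $2$'s). Examining the residue of $n$ modulo $5$ shows that an admissible $t$ exists for every $n\ge4$ with the sole exception of $n=7$, where $t\equiv1\pmod 5$ forces $t\in\{1,6\}$, both of which are isolated runs. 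I expect this single sporadic case, $P_m\square C_7$, to be the main obstacle: it falls outside the diagonal family and must be settled by exhibiting an explicit $5$-star coloring (periodic in the path direction, so that one pattern works for every $m\ge4$) and checking directly that it contains no bicolored $P_4$. Granting that finite verification, the two bounds combine to give $\chi_s(P_m\square C_n)=5$ for all $m,n\ge4$.
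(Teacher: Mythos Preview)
The paper does not supply its own proof of this theorem; it is quoted verbatim from \cite{2016} as background. So there is nothing to compare your argument against here.

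That said, your argument is sound. The lower bound via $P_m\square P_n\subseteq P_m\square C_n$ and Theorem~2 is clean. Your diagonal scheme $c(i,j)=(i+s_j)\bmod 5$ with increments $b_j\in\{2,3\}$ is exactly the kind of construction used throughout this subject, and your analysis of when it is a star coloring (disjointness of the increment classes $\{1,4\}$ and $\{2,3\}$, hence every potential bicolored $P_4$ is purely vertical or purely horizontal; the vertical case backtracks; the horizontal case forces the forbidden run pattern $x,x',x$) is correct. The reduction to finding a cyclic $\{2,3\}$-word of length $n$ with all runs of length $\ge 2$ and $\sum b_j\equiv 0\pmod 5$ is right, and you have correctly isolated $n=7$ as the only $n\ge 4$ for which no such word exists.

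Your one acknowledged loose end is the explicit coloring for $P_m\square C_7$. This can be closed without any ad hoc search: take any $5$-star coloring of $C_4\square C_7$ (the paper exhibits one in Figure~3\,(iv)) and pull it back along the natural homomorphism $P_m\square C_7\to C_4\square C_7$ given by $(i,j)\mapsto(i\bmod 4,j)$. Any $P_4$ in $P_m\square C_7$ has all four row indices within a window of width $3$, so they are pairwise incongruent modulo $4$; hence the projection is injective on that $P_4$ and its image is a $P_4$ in $C_4\square C_7$ with the same colors, which is not bicolored. This completes the case $n=7$ for every $m\ge 4$ and finishes your proof.
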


\begin{thm}\rm{\cite{2016}}
 For natural numbers  $m,n\ge 30$, $\chi_s(C_m \square C_n)=5$.
\end{thm}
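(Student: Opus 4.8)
The plan is to take the lower bound from Theorem~\ref{fertin}, which already gives $\chi_s(C_m \square C_n)\ge 5$, so that it suffices to exhibit a star coloring with five colors. I identify $C_m \square C_n$ with the toroidal grid on the vertex set $\mathbb{Z}_m \times \mathbb{Z}_n$, where $(i,j)$ is adjacent to $(i\pm 1,j)$ and $(i,j\pm 1)$ with indices read modulo $m$ and $n$. My first candidate is the linear coloring $c(i,j)=(i+2j)\bmod 5$, and I would begin by checking that it is a genuine star coloring of the infinite grid $\mathbb{Z}^2$. Along any edge the color changes by $\pm 1$ for a horizontal step and by $\pm 2$ for a vertical step; since the four values $\{1,2,3,4\}$ are distinct and nonzero modulo $5$, the map from color increment to step direction is a bijection, and in particular $c$ is proper. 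A bicolored $P_4$ with colors $a,b,a,b$ would have to realize the increment pattern $\delta,-\delta,\delta$, which decodes into a step $d$, then its reverse $-d$, then $d$ again; this forces the walk to backtrack so that the third vertex coincides with the first, contradicting simplicity. Hence no bicolored $P_4$ exists and $c$ is a star coloring of $\mathbb{Z}^2$.

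This coloring descends to the torus exactly when both seams are consistent, namely when $5\mid m$ and $2n\equiv 0\ (\mathrm{mod}\ 5)$, i.e. $5\mid m$ and $5\mid n$; and since every $P_4$ spans at most three edges while $m,n\ge 30$, no short path can wrap around a cycle, so the plane verification transfers to the torus and the divisible case is immediate. The real difficulty is that a globally linear coloring modulo $5$ can never close up unless both lengths are multiples of $5$, so the residues $m\bmod 5$ and $n\bmod 5$ must be absorbed by local modifications. Reading $c$ column by column, each column is a proper coloring of $C_m$ and the $j$-th column is the $0$-th column shifted by $2j$; closing the row cycle needs $5\mid m$, while closing the column cycle needs the accumulated shift $2n$ to vanish modulo $5$. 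My plan is to retain the linear pattern on the bulk of the torus and to insert a bounded number of \emph{transition} rows and columns near one horizontal and one vertical seam, using explicit patch patterns chosen so that the total accumulated shift in each direction returns to $0$ modulo $5$. Because $m,n\ge 30=5\cdot 6$, there is ample room to place these $O(1)$ transition lines far apart, and each residue pair $(s,t)=(m\bmod 5,\,n\bmod 5)$ calls for one fixed, tabulated patch.

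The main obstacle is verifying the seams. Away from the patches the coloring is linear and hence star by the computation above, so only the finitely many vertices within distance three of a transition line need to be checked for a bicolored $P_4$, together with the single ``corner'' where the horizontal and vertical transition bands cross. This reduces to a finite case analysis over the $25$ residue pairs in $\{0,1,2,3,4\}^2$, which the symmetry $C_m \square C_n\cong C_n \square C_m$ and the reflection symmetries of the torus cut down substantially. I expect the genuinely delicate point to be the corner, where two patched bands interact simultaneously: there one must confirm both that adjacent patched columns stay properly colored and that no bent $P_4$ threading the corner becomes bicolored. Designing the patches so that this corner check goes through uniformly in $(s,t)$, rather than demanding a bespoke fix for each pair, is where the construction must be done carefully, and the hypothesis $m,n\ge 30$ is precisely what guarantees the transition bands are short relative to the torus and therefore never interfere except at that one controlled corner.
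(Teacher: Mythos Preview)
This theorem is quoted from \cite{2016} without proof in the present paper, and is then superseded by the paper's own Theorems~\ref{3n}--\ref{mn}, so there is no in-paper argument to compare against directly. The relevant benchmark is the methodology the paper uses for those stronger results: explicit small building-block colorings (of $C_k\square C_4$, $C_k\square C_5$, and a few exceptional widths) whose boundary columns coincide, concatenated via Sylvester's theorem (Theorem~\ref{prime}) to realize every sufficiently large length.

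Your divisible case $5\mid m$, $5\mid n$ is correct and clean: the linear map $c(i,j)=(i+2j)\bmod 5$ is a star coloring of $\mathbb{Z}^2$ by exactly the increment-decoding argument you give, and the backtracking conclusion $v_2=v_0$ transfers to the torus verbatim since the increment-to-step bijection holds there as well. Indeed the paper's Figure~4\,$(ii)$ is precisely this coloring up to a permutation of the five colors, so your starting point coincides with the paper's basic tile.

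The gap is everything after that. You describe a strategy---retain the linear pattern on the bulk and absorb the residues $m\bmod 5$, $n\bmod 5$ in transition bands---but you do not construct a single such band, and you explicitly flag the corner where the two bands meet as the delicate step. Saying the problem ``reduces to a finite case analysis over the $25$ residue pairs'' is true but empty: the content of the proof \emph{is} that case analysis, namely exhibiting the patches and verifying that no bicolored $P_4$ crosses a seam or threads the corner. That verification is not automatic; a naive repetition or permutation of columns near a seam can easily create a bicolored $P_4$ at the interface. The paper's building-block approach is, in effect, a systematic execution of exactly the patching idea you propose, with the seams pre-verified once and for all by forcing the first few columns of each block to agree. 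Until you write down the transition bands and check them, what you have is a plausible plan, not a proof.
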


We start the paper with the following result.

\begin{thm}
{$\chi_s(C_3\square C_5)=6$.}
\end{thm}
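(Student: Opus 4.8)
The upper bound $\chi_s(C_3 \square C_5) \le 6$ is immediate from Theorem~\ref{fertin}, so the whole task is to establish the lower bound, i.e.\ that $C_3 \square C_5$ admits no star coloring with $5$ colors. The plan is to argue by contradiction: assume a star coloring $c$ with colors $\{1,2,3,4,5\}$ exists and derive an impossible configuration. I would organize the $15$ vertices into the five triangles $T_0,\dots,T_4$, where $T_j=\{a_j,b_j,c_j\}$ is the copy of $C_3=K_3$ lying over the $j$-th vertex of $C_5$, and the matching edges $a_ja_{j+1},\,b_jb_{j+1},\,c_jc_{j+1}$ (indices mod $5$) join consecutive triangles. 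Since each $T_j$ is a triangle, $c$ restricted to $T_j$ uses three distinct colors; write $S_j\subseteq\{1,\dots,5\}$ for this $3$-element color set, so exactly two colors are missing from each triangle.

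First I would record the local constraints imposed on two consecutive triangles $T_j,T_{j+1}$, which together form a triangular prism. Properness forces $c(a_j)\ne c(a_{j+1})$, $c(b_j)\ne c(b_{j+1})$, $c(c_j)\ne c(c_{j+1})$, and the absence of a bicolored $C_4$ forbids each of the three ``swap'' patterns on the prism's $4$-cycles (for instance $c(a_j)=c(b_{j+1})$ together with $c(b_j)=c(a_{j+1})$). In addition I would translate the remaining forbidden bicolored $P_4$'s: the paths that mix a triangle edge with row edges span three consecutive triangles $T_{j-1},T_j,T_{j+1}$, and the four-in-a-row paths inside a single $C_5$ give the ``no alternating $xyxy$'' constraints along each fiber. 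The useful first consequence to extract is which overlaps $|S_j\cap S_{j+1}|\in\{1,2,3\}$ are admissible and, in each case, which relative colorings of $T_{j+1}$ remain once $T_j$ is fixed; one checks that the nearest-neighbour conditions alone already cut the options down to a short list (e.g.\ when $S_j=S_{j+1}$ the transition must be a $3$-cycle on the shared colors).

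With these local moves in hand, the core of the proof is a propagation argument around the cyclic sequence $T_0\to T_1\to\cdots\to T_4\to T_0$. By the symmetry of the color set I may fix $c(a_0),c(b_0),c(c_0)=1,2,3$, and then track how the ``state'' (the set $S_j$ together with the relative arrangement on the three fibers) evolves step by step. Carrying this around all five triangles and imposing the cyclic closure $T_5=T_0$, I expect every branch to collapse once the row-$P_4$ and three-triangle constraints are added in: the arrangement forced after five steps cannot be reconciled with the starting arrangement. This contradiction yields $\chi_s(C_3\square C_5)\ge 6$, and hence equality.

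The main obstacle is controlling the branching so that the case analysis stays finite and readable rather than exploding. The delicate part is not the prism (nearest-neighbour) constraints, which are rather rigid and already force a purely permutation-like transition between triangles; it is the coupling produced by the three-triangle bicolored $P_4$'s and by the $P_4$'s living inside a single row $C_5$. These longer-range conditions are precisely what make the length-$5$ cycle fail while longer products such as $C_3\square C_7$ survive, so a group-theoretic parity count on the prism transitions alone will not suffice. I would therefore invest most of the effort in isolating a clean invariant propagating around the cycle---for instance a shift attached to each matching, or the pattern of missing colors $\{1,\dots,5\}\setminus S_j$---that is consistent locally but cannot close up over the odd number of triangles, so that the contradiction is obtained uniformly across all surviving cases.
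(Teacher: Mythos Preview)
Your proposal is a plan rather than a proof: the crucial work---either carrying out the branch analysis around the five triangles, or exhibiting the promised invariant---is left entirely to intentions (``I expect every branch to collapse''). As written there is no argument yet, only a framework. That framework is viable, but be aware that one of your stated intermediate facts is off: you say that when $S_j=S_{j+1}$ the transition ``must be a $3$-cycle on the shared colors'', yet already the bicolored-$P_4$ constraint inside a single prism rules out \emph{every} case with $|S_j\cap S_{j+1}|\ge 2$. For instance if $(a_j,b_j,c_j)=(1,2,3)$ and $(a_{j+1},b_{j+1},c_{j+1})=(3,1,2)$, then $a_j\,b_j\,b_{j+1}\,c_{j+1}$ is a $P_4$ colored $1\text{--}2\text{--}1\text{--}2$. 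So the prism constraints alone force $|S_j\cap S_{j+1}|=1$ for every $j$, and the three-triangle and row-$P_4$ conditions you emphasize are not where the difficulty lies.

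The paper exploits exactly this observation but then takes a completely different, global route that avoids any state-propagation. Once one knows that two consecutive columns can never share a pair of colors, the proof becomes a counting argument on color multiplicities: each color appears at most once per column, and pigeonhole considerations on the $15$ vertices and $5$ colors, combined with the ``no shared pair in consecutive columns'' rule, successively exclude the possibilities that some color appears $5$ times, then $4$ times, and finally force every color to appear exactly $3$ times; a last pigeonhole step on which pairs of columns host each color then yields the contradiction. This is short and case-free compared to tracking transitions around the cycle. Your approach would eventually succeed, but the moment you establish $|S_j\cap S_{j+1}|=1$ you should switch to counting rather than propagation.
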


\begin{proof}         
{Let $G=C_3\square C_5$. Clearly, $G$ is a grid with $3$ rows and $5$ columns. To the contrary and by Theorem \ref{fertin},  assume that $G$ has a star coloring by the colors $\{a,b,c,d,e\}$. First, we want to show that each color can be appeared at most $3$ times in the star coloring of $G$. 
To see this, first note that since every two vertices on each column of $G$ are adjacent, thus each color can be appeared at most once in each column of $G$ which implies that each color is appeared at most $5$ times in the star coloring of $G$. Now, with no loss of generality assume that the color $a$ is appeared $5$ times which means that the color $a$  is appeared in each column. Thus there are $10$ remaining vertices and $4$ available colors. Then by the pigeonhole principle there exists a color, say $b$, is appeared at least $3$ times. Since the color $b$ should appear in distinct columns, we conclude that  there are at least three columns containing the colors $\{a,b\}$ which implies that there are two consecutive columns containing the  colors $\{a,b\}$, which contradicts having no $2$-colored $P_4$. Thus, each color can be appeared at most $4$ times in the star coloring of $G$. Assume that the color $a$  is appeared $4$ times. Note that not more than one color can appear $4$ times, otherwise there are two consecutive columns containing the same pair of colors, a contradiction.
Hence, there are $11$ remaining vertices and $4$ available colors which the pigeonhole principle implies that one color, say $b$, is appeared exactly $3$ times. Now, there are $8$ remaining vertices and $3$ colors left which the pigeonhole principle implies that one color, say $c$, is appeared $3$ times, too. Next there are $5$ remaining vertices and $2$ colors left which implies that two colors, $d$ and $e$, are appeared $3$ and $2$ times, respectively. Since each color appears in each column at most once, one may suppose that the color $a$
is appeared in the first four columns. If each of colors $\{b,c,d\}$ are appeared three times in the columns which $a$ is appeared, then there are two consecutive columns containing the same pair of colors, a contradiction. Thus each of the colors $\{b,c,d\}$ is appeared exactly twice in the first four columns. Therefore the fifth column consists of the colors $\{b,c,d\}$. Clearly, if the fourth column contains two colors of $\{b,c,d\}$,  then there are two consecutive columns containing the same pair of colors, a contradiction. Thus with no loss of generality we can assume that the fourth  column contains the colors $\{a,b,e\}$. This implies that the colors $\{c,d\}$ should be appeared in two columns of the first three columns of $G$ including the color $a$. This forces the first and the third columns of $G$ consist of $\{a,c,d\}$, otherwise there are two consecutive columns containing $\{a,x\}$, where $x\in \{c,d\}$, a contradiction. Therefore the first and the fifth columns contain the same pair of colors, a contradiction. This yields that each color is appeared exactly three times and since there are $5$ columns, for each color there are two consecutive columns containing this color.  With no loss of generality, assume that the color $a$ is appeared in the first and second columns of $G$. Now, if  each colors of $\{b,c,d,e\}$ is appeared twice in these two columns, then we get a contradiction. This concludes that each of
the colors $\{b,c,d,e\}$ is appeared twice in the columns third, fourth and fifth.
 To choose two columns among these three columns we have ${3\choose 2}=3$ choices and also we have $4$ available colors $\{b,c,d,e\}$, by the pigeonhole principle we conclude that there are two consecutive columns containing the same pair of colors, a contradiction. The proof is complete.
In the following,  a $6$-star coloring of $C_3\square C_5$ is given.

\begin{center}

	\begin{tikzpicture}
	\begin{scope}[every node/.style={circle,thick,draw,scale=0.5}]
	\node (A) at (0,0) {\textbf{5}};
	\node (B) at (0,1) {\textbf{3}};
	\node (C) at (0,2) {\textbf{1}};
	\node (F) at (1,0) {\textbf{3}};
	\node (G) at (1,1) {\textbf{4}};
	\node (H) at (1,2) {\textbf{2}};
	\node (K) at (2,0) {\textbf{1}};
	\node (L) at (2,1) {\textbf{5}};
	\node (M) at (2,2) {\textbf{3}};
	\node (P) at (3,0) {\textbf{6}};
	\node (Q) at (3,1) {\textbf{1}};
	\node (R) at (3,2) {\textbf{4}};
	\node (S) at (4,0) {\textbf{3}};
	\node (T) at (4,1) {\textbf{2}};
	\node (U) at (4,2) {\textbf{6}};
	\end{scope}
	
	\begin{scope}[>={[black]},
	every node/.style={fill=white,circle},
	every edge/.style={draw=black}]
	\path [->] (A) edge (B);
	\path [->] (B) edge (C);
	\path [->] (F) edge (G);
	\path [->] (G) edge (H);
	\path [->] (K) edge (L);
	\path [->] (L) edge (M);
	\path [->] (P) edge (Q);
	\path [->] (Q) edge (R);
	\path [->] (A) edge (F);
	\path [->] (F) edge (K);
	\path [->] (K) edge (P);
	\path [->] (B) edge (G);
	\path [->] (G) edge (L);
	\path [->] (L) edge (Q);
	\path [->] (C) edge (H);
	\path [->] (H) edge (M);
	\path [->] (M) edge (R);
	\path [->] (S) edge (T);
	\path [->] (T) edge (U);
	\path [->] (P) edge (S);
	\path [->] (Q) edge (T);
	\path [->] (R) edge (U);
	\path [->] (A) edge[bend left=20] (C);
	\path [->] (F) edge[bend right=20] (H);
	\path [->] (K) edge[bend right=20] (M);
	\path [->] (P) edge[bend right=20] (R);
	\path [->] (A) edge[bend right=20] (S);
	\path [->] (B) edge[bend right=20] (T);
	\path [->] (C) edge[bend left=20] (U);
	\path [->] (S) edge[bend right=20] (U);
	\end{scope}
	\end{tikzpicture}
	\begin{center}
		Figure $1$: A $6$-star coloring of $C_3\square C_5$        
	\end{center}
\end{center}
}
\end{proof}

Given two integers $r$ and $s$, let $S(r,s)$ denote the set of all non-negative integer combinations of $r$ and $s$,    $ S(r,s) = \{\alpha r + \beta s \mid \alpha,\beta \in Z^{+}\cup \{0\}\}$.

The following result of Sylvester \cite{11}, has a key role in our proofs.

\begin{thm}{\rm \cite{11}} \label{prime}
{ If $r,s > 1$ are relatively prime integers, then $t\in S(r,s)$ for all $t \geq (s-1)(r-1)$. 
}\end{thm}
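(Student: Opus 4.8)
The plan is to prove the equivalent (and slightly sharper) statement that the largest integer \emph{not} in $S(r,s)$ is $rs-r-s$, so that every integer exceeding it lies in $S(r,s)$; since $(s-1)(r-1)=rs-r-s+1$, this is exactly the claimed threshold. The whole argument rests on a single structural fact: because $\gcd(r,s)=1$, the integer $r$ is invertible modulo $s$, so multiplication by $r$ permutes the residue classes modulo $s$. Consequently the $s$ numbers $0\cdot r,\,1\cdot r,\,\dots,\,(s-1)r$ are pairwise incongruent modulo $s$ and hence form a complete residue system modulo $s$.

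With this in hand I would fix an arbitrary integer $t$ and locate the representative of its residue class. By the complete-residue-system fact there is a \emph{unique} $\alpha\in\{0,1,\dots,s-1\}$ with $\alpha r\equiv t\pmod{s}$. Then $t-\alpha r$ is divisible by $s$, so we may write $t-\alpha r=\beta s$ for a uniquely determined integer $\beta$. By construction $\alpha\ge 0$ and $t=\alpha r+\beta s$, so the only thing left to verify is that the hypothesis $t\ge (s-1)(r-1)$ forces $\beta\ge 0$.

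The key estimate is a short contrapositive bookkeeping step. Suppose $\beta<0$, that is $\beta\le -1$. Using the canonical bound $\alpha\le s-1$ together with $\beta\le -1$ we obtain
$$
t=\alpha r+\beta s\le (s-1)r-s=rs-r-s=(s-1)(r-1)-1,
$$
so $t<(s-1)(r-1)$. Contrapositively, if $t\ge (s-1)(r-1)$ then $\beta\ge 0$, and hence $t=\alpha r+\beta s$ exhibits $t$ as a non-negative integer combination of $r$ and $s$, i.e.\ $t\in S(r,s)$, as required.

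I expect the main obstacle to lie in the bookkeeping rather than in any deep idea: one must use the \emph{sharp} bound $\alpha\le s-1$, coming from choosing the canonical representative in the residue system, together with $\beta\le -1$, in order to land exactly on the threshold $(s-1)(r-1)$; the crude inequality $\alpha r\le (s-1)r$ would only yield the weaker conclusion $t\ge (s-1)r$. If one wishes to confirm that the threshold is best possible, it suffices to check that $t=rs-r-s$ is \emph{not} in $S(r,s)$, which shows the residue argument is tight.
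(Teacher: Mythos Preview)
Your argument is correct and is the standard residue-system proof of Sylvester's result: the unique choice of $\alpha\in\{0,\dots,s-1\}$ with $\alpha r\equiv t\pmod{s}$, together with the estimate $t=\alpha r+\beta s\le (s-1)r-s$ when $\beta\le -1$, pins down the threshold exactly.

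As for comparison with the paper: the paper does not supply a proof of this theorem at all. It is quoted as a classical result of Sylvester \cite{11} and used as a black box in the subsequent constructions (to guarantee that all sufficiently large $n$ are non-negative combinations of the block lengths appearing in the figures). So there is nothing to compare your approach against; you have simply filled in a proof the authors chose to cite rather than reproduce.
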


Now, we are in a position to prove the following result.

\begin{thm}\label{3n}
For every natural number $n\ge 4$  and $n \neq 5$, {$\chi_s(C_3\square C_n)=5$.
}\end{thm}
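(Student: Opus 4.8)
The equality splits into the lower bound $\chi_s(C_3\square C_n)\ge 5$, which is immediate from Theorem~\ref{fertin}, and the upper bound, which carries all the content: I must exhibit a $5$-star coloring of $C_3\square C_n$ for every $n\ge 4$ with $n\ne 5$. The plan is to view $G=C_3\square C_n$ as a cyclic sequence of $n$ columns, each column being a triangle that must receive three distinct colors from $\{1,\dots,5\}$, with row edges joining equal rows of consecutive (cyclically) columns. The first thing to record is that, thanks to the triangle structure, a bicolored $P_4$ is very local: writing each edge of a $P_4$ as a column edge ($C$) or a row edge ($R$), any pattern containing two consecutive $C$'s forces three of its vertices into one triangle, where two of them would share a color, which is impossible in a proper coloring. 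Hence the only bicolored $P_4$'s to rule out are the horizontal one ($RRR$, living in a single row) and the mixed types $RRC$, $CRR$, $RCR$, $CRC$, each of which occupies at most three consecutive columns. Thus verifying the star property reduces to a local condition on every window of three consecutive columns.

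This locality is what makes a block-concatenation argument possible, and it is here that Theorem~\ref{prime} enters. I would construct a small library of \emph{blocks} (contiguous strips of columns) of two coprime widths $p$ and $q$, all sharing the same left-interface and right-interface columns, say every block begins with a fixed column $L$ and ends with a fixed column $R$ chosen so that the transition $R\mid L$ is itself safe. Concatenating any sequence of such blocks cyclically then produces a proper coloring in which every three-column window is either internal to one block or is one of finitely many seam windows of the form $(\dots R)\mid(L\dots)$; since all blocks share $L$ and $R$, there are only boundedly many window-types to inspect, independent of how many blocks are used. Once the blocks are verified, a cyclic coloring of width $n$ exists whenever $n=\alpha p+\beta q$ with $\alpha,\beta\in\mathbb{Z}^{+}\cup\{0\}$, i.e. whenever $n\in S(p,q)$; by Theorem~\ref{prime} this covers all $n\ge (p-1)(q-1)$. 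The finitely many remaining values $4\le n<(p-1)(q-1)$, together with any small $n\notin S(p,q)$, I would dispose of by exhibiting explicit $5$-colorings one at a time, just as a $6$-coloring of $C_3\square C_5$ was drawn above.

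The main obstacle is entirely in the design of the blocks, and is two-fold. First, one must actually pin down the interface columns $L,R$ and fill in the interiors of the two coprime-width blocks so that all of the $RRR$, $RRC$, $CRR$, $RCR$, $CRC$ conditions hold both inside each block and across every seam; the $CRC$ type, which straddles just two columns and is driven by pairs of colors recurring in shifted rows of adjacent columns, is the delicate one, since a naive periodic choice already plants a bicolored $P_4$ at a seam (a period-$3$ pattern with two adjacent columns sharing two colors does exactly this). Second, the chosen widths $p,q$ and the list of hand-built small cases must be arranged so that the set of realizable lengths is precisely $\{n:n\ge 4,\ n\ne 5\}$; in particular the construction must not accidentally yield a $5$-coloring of $C_3\square C_5$, which is consistent with the fact that a single short block cannot be wrapped to length $5$ and that $\chi_s(C_3\square C_5)=6$ was just established. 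Checking that the cyclic wrap creates no bicolored $P_4$ (it is a seam like any other, given the fixed interface) and that the numerical-semigroup bookkeeping leaves no gap above $3$ other than $5$ is the step that demands care rather than new ideas.
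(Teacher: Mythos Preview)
Your plan is exactly the paper's approach: concretely it takes $p=4$, $q=7$ (Figure~2\,(i) and (iii)), throws in a width-$6$ block (Figure~2\,(ii)) so that non-negative combinations of $\{4,6,7\}$ already cover every $n\in\{4,\dots,17\}\setminus\{5,9\}$, and disposes of $n=9$ with one ad hoc coloring (Figure~2\,(iv)). One small correction to your locality count: the horizontal $RRR$ path spans four consecutive columns, not three, so the relevant window width is four; this is why the paper's interface condition is that the \emph{first three columns} of the blocks coincide, which makes every four-column window across a seam identical to a cyclic window already present inside one of the individual base colorings.
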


\begin{proof} 
{By Theorem \ref{prime}, every $n \ge 18$ is a linear combination of $4$ and $7$ with non-negative integer coefficients. Now, since 
the colors of the first three columns of Figure $2,\,(i)$ and $(iii)$ are the same, by considering some suitable copies of Figure $2,\,(i)$ and $(iii)$, one may obtain a $5$-star coloring of $C_3\square C_n$,  for $n\ge 18$.
Notice that every integer $n$, $n \in \{8,\ldots,17\} \backslash \{ 9 \}$ is a linear combination of $\{4,6,7\}$  and this implies $\chi_s(C_3\square C_n)\le 5$, ($n\neq 9$). Now, by considering  Figure $2,\,(iv)$ and using Theorem \ref{fertin}, the proof is complete.

\begin{center}
	\begin{tikzpicture}
	\hspace*{-8cm}
	\begin{scope}[every node/.style={circle,thick,draw,scale=0.5}]
	\node (A) at (0,0) {\textbf{1}};
	\node (B) at (0,1) {\textbf{3}};
	\node (C) at (0,2) {\textbf{4}};
	\node (F) at (1,0) {\textbf{5}};
	\node (G) at (1,1) {\textbf{1}};
	\node (H) at (1,2) {\textbf{2}};
	\node (K) at (2,0) {\textbf{4}};
	\node (L) at (2,1) {\textbf{2}};
	\node (M) at (2,2) {\textbf{3}};
	\node (P) at (3,0) {\textbf{2}};
	\node (Q) at (3,1) {\textbf{5}};
	\node (R) at (3,2) {\textbf{1}};
	\end{scope}
	\begin{scope}[>={[black]},
	every node/.style={fill=white,circle},
	every edge/.style={draw=black}]
	\path [->] (A) edge (B);
	\path [->] (B) edge (C);
	\path [->] (F) edge (G);
	\path [->] (G) edge (H);
	\path [->] (K) edge (L);
	\path [->] (L) edge (M);
	\path [->] (P) edge (Q);
	\path [->] (Q) edge (R);
	\path [->] (A) edge (F);
	\path [->] (F) edge (K);
	\path [->] (K) edge (P);
	\path [->] (B) edge (G);
	\path [->] (G) edge (L);
	\path [->] (L) edge (Q);
	\path [->] (C) edge (H);
	\path [->] (H) edge (M);
	\path [->] (M) edge (R);
	\path [->] (A) edge[bend left=20] (C);
	\path [->] (F) edge[bend right=20] (H);
	\path [->] (K) edge[bend right=20] (M);
	\path [->] (P) edge[bend right=20] (R);
	\path [->] (A) edge[bend right=20] (P);
	\path [->] (B) edge[bend right=20] (Q);
	\path [->] (C) edge[bend left=20] (R);
	\end{scope}
	\end{tikzpicture} 
 \hspace{2cm}
 \begin{tikzpicture}
	\begin{scope}[every node/.style={circle,thick,draw,scale=0.5}]
	\node (A) at (0,0) {\textbf{1}};
	\node (B) at (0,1) {\textbf{3}};
	\node (C) at (0,2) {\textbf{4}};
	\node (F) at (1,0) {\textbf{5}};
	\node (G) at (1,1) {\textbf{1}};
	\node (H) at (1,2) {\textbf{2}};
	\node (K) at (2,0) {\textbf{4}};
	\node (L) at (2,1) {\textbf{2}};
	\node (M) at (2,2) {\textbf{3}};
	\node (P) at (3,0) {\textbf{1}};
	\node (Q) at (3,1) {\textbf{5}};
	\node (R) at (3,2) {\textbf{4}};
	\node (S) at (4,0) {\textbf{3}};
	\node (T) at (4,1) {\textbf{1}};
	\node (U) at (4,2) {\textbf{2}};
	\node (V) at (5,0) {\textbf{4}};
	\node (W) at (5,1) {\textbf{2}};
	\node (X) at (5,2) {\textbf{5}};
	\end{scope}
	
	\begin{scope}[>={[black]},
	every node/.style={fill=white,circle},
	every edge/.style={draw=black}]
	\path [->] (A) edge (B);
	\path [->] (B) edge (C);
	\path [->] (F) edge (G);
	\path [->] (G) edge (H);
	\path [->] (K) edge (L);
	\path [->] (L) edge (M);
	\path [->] (P) edge (Q);
	\path [->] (Q) edge (R);
	\path [->] (A) edge (F);
	\path [->] (F) edge (K);
	\path [->] (K) edge (P);
	\path [->] (B) edge (G);
	\path [->] (G) edge (L);
	\path [->] (L) edge (Q);
	\path [->] (C) edge (H);
	\path [->] (H) edge (M);
	\path [->] (M) edge (R);
	\path [->] (S) edge (T);
	\path [->] (T) edge (U);
	\path [->] (V) edge (W);
	\path [->] (W) edge (X);
	\path [->] (R) edge (U);
	\path [->] (U) edge (X);
	\path [->] (Q) edge (T);
	\path [->] (T) edge (W);
	\path [->] (P) edge (S);
	\path [->] (S) edge (V);
	\path [->] (A) edge[bend left=20] (C);
	\path [->] (F) edge[bend right=20] (H);
	\path [->] (K) edge[bend right=20] (M);
	\path [->] (P) edge[bend right=20] (R);
	\path [->] (S) edge[bend right=20] (U);
	\path [->] (V) edge[bend right=20] (X);
	\path [->] (A) edge[bend right=20] (V);
	\path [->] (B) edge[bend right=20] (W);
	\path [->] (C) edge[bend left=20] (X);
	\end{scope}
	\end{tikzpicture}
\end{center}
\vspace{-.5cm}
\hspace*{3.5cm}$(i)$ \hspace*{6cm}$(ii)$

\begin{center}
	
	\begin{tikzpicture}
	\begin{scope}[every node/.style={circle,thick,draw,scale=0.5}]
	\node (A) at (0,0) {\textbf{1}};
	\node (B) at (0,1) {\textbf{3}};
	\node (C) at (0,2) {\textbf{4}};
	\node (D) at (1,0) {\textbf{5}};
	\node (E) at (1,1) {\textbf{1}};
	\node (F) at (1,2) {\textbf{2}};
	\node (G) at (2,0) {\textbf{4}};
	\node (H) at (2,1) {\textbf{2}};
	\node (I) at (2,2) {\textbf{3}};
	\node (J) at (3,0) {\textbf{2}};
	\node (K) at (3,1) {\textbf{5}};
	\node (L) at (3,2) {\textbf{1}};
	\node (M) at (4,0) {\textbf{3}};
	\node (N) at (4,1) {\textbf{4}};
	\node (O) at (4,2) {\textbf{5}};
	\node (P) at (5,0) {\textbf{4}};
	\node (Q) at (5,1) {\textbf{1}};
	\node (R) at (5,2) {\textbf{2}};
	\node (S) at (6,0) {\textbf{2}};
	\node (T) at (6,1) {\textbf{5}};
	\node (U) at (6,2) {\textbf{3}};
	\end{scope}
	
	\begin{scope}[>={[black]},
	every node/.style={fill=white,circle},
	every edge/.style={draw=black}]
	\path [->] (A) edge (B);
	\path [->] (B) edge (C);
	\path [->] (D) edge (E);
	\path [->] (E) edge (F);
	\path [->] (G) edge (H);
	\path [->] (H) edge (I);
	\path [->] (J) edge (K);
	\path [->] (K) edge (L);
	\path [->] (M) edge (N);
	\path [->] (N) edge (O);
	\path [->] (P) edge (Q);
	\path [->] (Q) edge (R);
	\path [->] (S) edge (T);
	\path [->] (T) edge (U);
	\path [->] (A) edge (D);
	\path [->] (D) edge (G);
	\path [->] (G) edge (J);
	\path [->] (J) edge (M);
	\path [->] (M) edge (P);
	\path [->] (P) edge (S);
	\path [->] (B) edge (E);
	\path [->] (E) edge (H);
	\path [->] (H) edge (K);
	\path [->] (K) edge (N);
	\path [->] (N) edge (Q);
	\path [->] (Q) edge (T);
	\path [->] (C) edge (F);
	\path [->] (F) edge (I);
	\path [->] (I) edge (L);
	\path [->] (L) edge (O);
	\path [->] (O) edge (R);
	\path [->] (R) edge (U);
	\path [->] (A) edge[bend left=20] (C);
	\path [->] (D) edge[bend right=20] (F);
	\path [->] (G) edge[bend right=20] (I);
	\path [->] (J) edge[bend right=20] (L);
	\path [->] (M) edge[bend right=20] (O);
	\path [->] (P) edge[bend right=20] (R);
	\path [->] (S) edge[bend right=20] (U);
	\path [->] (A) edge[bend right=20] (S);
	\path [->] (B) edge[bend right=20] (T);
	\path [->] (C) edge[bend left=20] (U);
	\end{scope}
	\end{tikzpicture}

$(iii)$
\vspace{1cm}

	\begin{tikzpicture}
	\begin{scope}[every node/.style={circle,thick,draw,scale=0.5}]
	\node (A) at (0,0) {\textbf{1}};
	\node (B) at (0,1) {\textbf{3}};
	\node (C) at (0,2) {\textbf{4}};
	\node (D) at (1,0) {\textbf{5}};
	\node (E) at (1,1) {\textbf{1}};
	\node (F) at (1,2) {\textbf{2}};
	\node (G) at (2,0) {\textbf{4}};
	\node (H) at (2,1) {\textbf{2}};
	\node (I) at (2,2) {\textbf{3}};
	\node (J) at (3,0) {\textbf{1}};
	\node (K) at (3,1) {\textbf{5}};
	\node (L) at (3,2) {\textbf{4}};
	\node (M) at (4,0) {\textbf{3}};
	\node (N) at (4,1) {\textbf{1}};
	\node (O) at (4,2) {\textbf{2}};
	\node (P) at (5,0) {\textbf{4}};
	\node (Q) at (5,1) {\textbf{2}};
	\node (R) at (5,2) {\textbf{5}};
	\node (S) at (6,0) {\textbf{5}};
	\node (T) at (6,1) {\textbf{3}};
	\node (U) at (6,2) {\textbf{1}};
	\node (V) at (7,0) {\textbf{3}};
	\node (W) at (7,1) {\textbf{4}};
	\node (X) at (7,2) {\textbf{2}};
	\node (Y) at (8,0) {\textbf{2}};
	\node (Z) at (8,1) {\textbf{1}};
	\node (ZZ) at (8,2) {\textbf{5}};
	\end{scope}
	
	\begin{scope}[>={[black]},
	every node/.style={fill=white,circle},
	every edge/.style={draw=black}]
	\path [->] (A) edge (B);
	\path [->] (B) edge (C);
	\path [->] (D) edge (E);
	\path [->] (E) edge (F);
	\path [->] (G) edge (H);
	\path [->] (H) edge (I);
	\path [->] (J) edge (K);
	\path [->] (K) edge (L);
	\path [->] (M) edge (N);
	\path [->] (N) edge (O);
	\path [->] (P) edge (Q);
	\path [->] (Q) edge (R);
	\path [->] (S) edge (T);
	\path [->] (T) edge (U);
	\path [->] (V) edge (W);
	\path [->] (W) edge (X);
	\path [->] (Y) edge (Z);
	\path [->] (Z) edge (ZZ);
	\path [->] (A) edge (D);
	\path [->] (D) edge (G);
	\path [->] (G) edge (J);
	\path [->] (J) edge (M);
	\path [->] (M) edge (P);
	\path [->] (P) edge (S);
	\path [->] (B) edge (E);
	\path [->] (E) edge (H);
	\path [->] (H) edge (K);
	\path [->] (K) edge (N);
	\path [->] (N) edge (Q);
	\path [->] (Q) edge (T);
	\path [->] (C) edge (F);
	\path [->] (F) edge (I);
	\path [->] (I) edge (L);
	\path [->] (L) edge (O);
	\path [->] (O) edge (R);
	\path [->] (R) edge (U);
	\path [->] (S) edge (V);
	\path [->] (V) edge (Y);
	\path [->] (T) edge (W);
	\path [->] (W) edge (Z);
	\path [->] (U) edge (X);
	\path [->] (X) edge (ZZ);
	\path [->] (A) edge[bend left=20] (C);
	\path [->] (D) edge[bend right=20] (F);
	\path [->] (G) edge[bend right=20] (I);
	\path [->] (J) edge[bend right=20] (L);
	\path [->] (M) edge[bend right=20] (O);
	\path [->] (P) edge[bend right=20] (R);
	\path [->] (S) edge[bend right=20] (U);
	\path [->] (V) edge[bend right=20] (X);
	\path [->] (Y) edge[bend right=20] (ZZ);
	\path [->] (A) edge[bend right=10] (Y);
	\path [->] (B) edge[bend right=10] (Z);
	\path [->] (C) edge[bend left=10] (ZZ);
	\end{scope}
	\end{tikzpicture}

$(iv)$
\end{center}

\begin{center}
Figure $2$:  $5$-star colorings of $C_3\square C_n$, for $n=4,6,7,9$
\end{center}

}
\end{proof}

\begin{thm}\label{4n}
{For every natural number $n\ge 3$ , $\chi_s(C_4\square C_n)=5$.
}\end{thm}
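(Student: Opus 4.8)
The plan is to reduce the statement to the construction of a $5$-star coloring of $C_4\square C_n$ for every $n\ge 3$, since the lower bound $\chi_s(C_4\square C_n)\ge 5$ is already supplied by Theorem \ref{fertin}. Thus the entire content of the theorem is the upper bound $\chi_s(C_4\square C_n)\le 5$, and I would obtain it by the same block-concatenation technique used in the proof of Theorem \ref{3n}. Throughout, I would view $C_4\square C_n$ as a cyclic array of $n$ columns, each column being a copy of $C_4$; note that within a column the four cells form a $4$-cycle, so each column carries its own wrap-around edge, and since $\chi_s(C_4)=3$ every column must receive at least three colors.

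First I would construct explicit $5$-star colorings of two blocks $B_r$ and $B_s$ of $C_4\square P_r$ and $C_4\square P_s$ for a pair of small coprime widths $r,s$ (for instance $r=4$, $s=5$, or $r=4$, $s=7$), arranged so that the first two or three columns of $B_r$ and $B_s$ coincide in a common ``interface'', and so that the last columns of each block are compatible with that interface. The point of fixing the interface is that it lets arbitrary cyclic sequences of the two blocks be glued together: adjacent blocks always meet end-to-interface in the same way, so placing any string of blocks around the cycle and closing it up yields a genuine $5$-star coloring of $C_4\square C_n$ whenever $n=\alpha r+\beta s$ with $\alpha,\beta\ge 0$. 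Invoking Sylvester's theorem (Theorem \ref{prime}), every $n\ge (r-1)(s-1)$ is such a combination, which settles all sufficiently large $n$ at once.

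Second, I would dispatch the finitely many remaining values of $n$ below the bound $(r-1)(s-1)$. As in the proof of Theorem \ref{3n}, a cheap way to enlarge the attainable set is to admit a third block width while keeping the same interface, so that more small $n$ become nonnegative combinations; any values still not covered — and in particular the genuinely small cases such as $n=3$ — I would exhibit by hand as explicit $5$-star colorings displayed in a figure, in exactly the format of Figure $2$. Here, unlike in the $C_3$ case, there are no exceptional values: every $n\ge 3$ must be covered, so the table of small cases has to be complete.

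The main obstacle is the design of the blocks so that no bicolored $P_4$ is created either inside a block or across a seam. Because a $P_4$ can straddle a boundary and spread over three consecutive columns, and because the $C_4$ direction is itself cyclic, the interface must pin down enough boundary columns to rule out a bicolored $P_4$ at every junction, including the single closing seam where the last block meets the first. Producing blocks that are simultaneously internally star-colored, share a common header, and glue seamlessly is the delicate, computation-heavy heart of the argument; once such blocks are in hand, the Sylvester step and the verification of the small cases are routine.
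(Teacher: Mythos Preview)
Your proposal is correct and follows essentially the same approach as the paper: the paper uses blocks of widths $4$ and $5$ (Figure~3\,(i),(ii)) sharing the same first three columns, invokes Theorem~\ref{prime} to cover all $n\ge 12$, and then handles the leftover small values $n=6,7,11$ by explicit colorings (Figure~3\,(iii)--(v)), with $n=4,5,8,9,10$ already combinations of $4$ and $5$. The only shortcut you miss is that the case $n=3$ need not be done by hand, since $C_4\square C_3\cong C_3\square C_4$ is already covered by Theorem~\ref{3n}.
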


\begin{proof} 
{By Theorem \ref{3n}, we can assume that $n\ge 4$. Then, by Theorem \ref{prime}, every $n \ge 12$ is a linear combination of $4$ and $5$ with non-negative integer coefficients. Now, since 
the colors of the first three columns of Figure $3,\,(i)$ and $(ii)$ are the same, by considering some suitable copies of Figure $3,\, (i)$ and $(ii)$, one may obtain a $5$-star coloring of $C_4\square C_n$, for $n\ge 12$.
Notice that every integer $n=4,5,8,9,10 $ is a linear combination of $\{4,5\}$  and for  $n=6,7,11$, by considering Figure $3,\,(iii)$, $(iv)$ and $(v)$ and using Theorem \ref{fertin}, the proof is complete.
\\
\begin{center}
\begin{tikzpicture}
\begin{scope}[every node/.style={circle,thick,draw,scale=0.5}]
\node (A) at (0,0) {\textbf{4}};
\node (B) at (0,1) {\textbf{2}};
\node (C) at (0,2) {\textbf{4}};
\node (D) at (0,3) {\textbf{1}};
\node (F) at (1,0) {\textbf{5}};
\node (G) at (1,1) {\textbf{3}};
\node (H) at (1,2) {\textbf{5}};
\node (I) at (1,3) {\textbf{2}};
\node (K) at (2,0) {\textbf{1}};
\node (L) at (2,1) {\textbf{4}};
\node (M) at (2,2) {\textbf{1}};
\node (N) at (2,3) {\textbf{3}};
\node (P) at (3,0) {\textbf{5}};
\node (Q) at (3,1) {\textbf{3}};
\node (R) at (3,2) {\textbf{5}};
\node (S) at (3,3) {\textbf{2}};
\end{scope}

\begin{scope}[>={[black]},
every node/.style={fill=white,circle},
every edge/.style={draw=black}]
\path [->] (A) edge (B);
\path [->] (B) edge (C);
\path [->] (C) edge (D);
\path [->] (F) edge (G);
\path [->] (G) edge (H);
\path [->] (H) edge (I);;
\path [->] (K) edge (L);
\path [->] (L) edge (M);
\path [->] (M) edge (N);
\path [->] (P) edge (Q);
\path [->] (Q) edge (R);
\path [->] (R) edge (S);
\path [->] (A) edge (F);
\path [->] (F) edge (K);
\path [->] (K) edge (P);
\path [->] (B) edge (G);
\path [->] (G) edge (L);
\path [->] (L) edge (Q);
\path [->] (C) edge (H);
\path [->] (H) edge (M);
\path [->] (M) edge (R);
\path [->] (D) edge (I);
\path [->] (I) edge (N);
\path [->] (N) edge (S);
\path [->] (A) edge[bend left=20] (D);
\path [->] (F) edge[bend right=20] (I);
\path [->] (K) edge[bend right=20] (N);
\path [->] (P) edge[bend right=20] (S);
\path [->] (A) edge[bend right=20] (P);
\path [->] (B) edge[bend right=20] (Q);
\path [->] (C) edge[bend right=20] (R);
\path [->] (D) edge[bend left=20] (S);
\end{scope}
\end{tikzpicture}
\hspace{3.5cm}
\begin{tikzpicture}
\begin{scope}[every node/.style={circle,thick,draw,scale=0.5}]
\node (A) at (0,0) {\textbf{4}};
\node (B) at (0,1) {\textbf{2}};
\node (C) at (0,2) {\textbf{4}};
\node (D) at (0,3) {\textbf{1}};
\node (F) at (1,0) {\textbf{5}};
\node (G) at (1,1) {\textbf{3}};
\node (H) at (1,2) {\textbf{5}};
\node (I) at (1,3) {\textbf{2}};
\node (K) at (2,0) {\textbf{1}};
\node (L) at (2,1) {\textbf{4}};
\node (M) at (2,2) {\textbf{1}};
\node (N) at (2,3) {\textbf{3}};
\node (P) at (3,0) {\textbf{2}};
\node (Q) at (3,1) {\textbf{5}};
\node (R) at (3,2) {\textbf{2}};
\node (S) at (3,3) {\textbf{4}};
\node (T) at (4,0) {\textbf{3}};
\node (U) at (4,1) {\textbf{1}};
\node (V) at (4,2) {\textbf{3}};
\node (W) at (4,3) {\textbf{5}};
\end{scope}

\begin{scope}[>={[black]},
every node/.style={fill=white,circle},
every edge/.style={draw=black}]
\path [->] (A) edge (B);
\path [->] (B) edge (C);
\path [->] (C) edge (D);
\path [->] (F) edge (G);
\path [->] (G) edge (H);
\path [->] (H) edge (I);;
\path [->] (K) edge (L);
\path [->] (L) edge (M);
\path [->] (M) edge (N);
\path [->] (P) edge (Q);
\path [->] (Q) edge (R);
\path [->] (R) edge (S);
\path [->] (T) edge (U);
\path [->] (U) edge (V);
\path [->] (V) edge (W);
\path [->] (A) edge (F);
\path [->] (F) edge (K);
\path [->] (K) edge (P);
\path [->] (B) edge (G);
\path [->] (G) edge (L);
\path [->] (L) edge (Q);
\path [->] (C) edge (H);
\path [->] (H) edge (M);
\path [->] (M) edge (R);
\path [->] (D) edge (I);
\path [->] (I) edge (N);
\path [->] (N) edge (S);
\path [->] (P) edge (T);
\path [->] (Q) edge (U);
\path [->] (R) edge (V);
\path [->] (S) edge (W);
\path [->] (A) edge[bend left=20] (D);
\path [->] (F) edge[bend right=20] (I);
\path [->] (K) edge[bend right=20] (N);
\path [->] (P) edge[bend right=20] (S);
\path [->] (T) edge[bend right=20] (W);
\path [->] (A) edge[bend right=20] (T);
\path [->] (B) edge[bend right=20] (U);
\path [->] (C) edge[bend right=20] (V);
\path [->] (D) edge[bend left=20] (W);
\end{scope}
\end{tikzpicture}
\end{center}
\vspace{-.5cm}
\hspace*{3cm}$(i)$ \hspace*{7.5cm}$(ii)$

\begin{center}

\begin{tikzpicture}
\hspace{-2cm}
\begin{scope}[every node/.style={circle,thick,draw,scale=0.5}]
\node (A) at (0,0) {\textbf{4}};
\node (B) at (0,1) {\textbf{5}};
\node (C) at (0,2) {\textbf{2}};
\node (D) at (0,3) {\textbf{1}};
\node (F) at (1,0) {\textbf{2}};
\node (G) at (1,1) {\textbf{1}};
\node (H) at (1,2) {\textbf{4}};
\node (I) at (1,3) {\textbf{3}};
\node (K) at (2,0) {\textbf{5}};
\node (L) at (2,1) {\textbf{3}};
\node (M) at (2,2) {\textbf{2}};
\node (N) at (2,3) {\textbf{1}};
\node (P) at (3,0) {\textbf{2}};
\node (Q) at (3,1) {\textbf{1}};
\node (R) at (3,2) {\textbf{5}};
\node (S) at (3,3) {\textbf{4}};
\node (T) at (4,0) {\textbf{3}};
\node (U) at (4,1) {\textbf{4}};
\node (V) at (4,2) {\textbf{2}};
\node (W) at (4,3) {\textbf{1}};
\node (A1) at (5,0) {\textbf{2}};
\node (B1) at (5,1) {\textbf{1}};
\node (C1) at (5,2) {\textbf{3}};
\node (D1) at (5,3) {\textbf{5}};
\end{scope}

\begin{scope}[>={[black]},
every node/.style={fill=white,circle},
every edge/.style={draw=black}]
\path [->] (A) edge (B);
\path [->] (B) edge (C);
\path [->] (C) edge (D);
\path [->] (F) edge (G);
\path [->] (G) edge (H);
\path [->] (H) edge (I);;
\path [->] (K) edge (L);
\path [->] (L) edge (M);
\path [->] (M) edge (N);
\path [->] (P) edge (Q);
\path [->] (Q) edge (R);
\path [->] (R) edge (S);
\path [->] (T) edge (U);
\path [->] (U) edge (V);
\path [->] (V) edge (W);
\path [->] (A1) edge (B1);
\path [->] (B1) edge (C1);
\path [->] (C1) edge (D1);
\path [->] (A) edge (F);
\path [->] (F) edge (K);
\path [->] (K) edge (P);
\path [->] (T) edge (A1);
\path [->] (B) edge (G);
\path [->] (G) edge (L);
\path [->] (L) edge (Q);
\path [->] (U) edge (B1);
\path [->] (C) edge (H);
\path [->] (H) edge (M);
\path [->] (M) edge (R);
\path [->] (V) edge (C1);
\path [->] (D) edge (I);
\path [->] (I) edge (N);
\path [->] (N) edge (S);
\path [->] (W) edge (D1);
\path [->] (P) edge (T);
\path [->] (Q) edge (U);
\path [->] (R) edge (V);
\path [->] (S) edge (W);
\path [->] (A) edge[bend left=20] (D);
\path [->] (F) edge[bend right=20] (I);
\path [->] (K) edge[bend right=20] (N);
\path [->] (P) edge[bend right=20] (S);
\path [->] (T) edge[bend right=20] (W);
\path [->] (A1) edge[bend right=20] (D1);
\path [->] (A) edge[bend right=20] (A1);
\path [->] (B) edge[bend right=20] (B1);
\path [->] (C) edge[bend right=20] (C1);
\path [->] (D) edge[bend left=20] (D1);
\end{scope}
\end{tikzpicture} 
 \hspace{2cm}
 \begin{tikzpicture}
\begin{scope}[every node/.style={circle,thick,draw,scale=0.5}]
\node (A) at (0,0) {\textbf{5}};
\node (B) at (0,1) {\textbf{1}};
\node (C) at (0,2) {\textbf{4}};
\node (D) at (0,3) {\textbf{1}};
\node (F) at (1,0) {\textbf{1}};
\node (G) at (1,1) {\textbf{3}};
\node (H) at (1,2) {\textbf{1}};
\node (I) at (1,3) {\textbf{2}};
\node (K) at (2,0) {\textbf{4}};
\node (L) at (2,1) {\textbf{1}};
\node (M) at (2,2) {\textbf{5}};
\node (N) at (2,3) {\textbf{1}};
\node (P) at (3,0) {\textbf{1}};
\node (Q) at (3,1) {\textbf{2}};
\node (R) at (3,2) {\textbf{1}};
\node (S) at (3,3) {\textbf{3}};
\node (T) at (4,0) {\textbf{5}};
\node (U) at (4,1) {\textbf{1}};
\node (V) at (4,2) {\textbf{4}};
\node (W) at (4,3) {\textbf{1}};
\node (A1) at (5,0) {\textbf{1}};
\node (B1) at (5,1) {\textbf{3}};
\node (C1) at (5,2) {\textbf{1}};
\node (D1) at (5,3) {\textbf{2}};
\node (F1) at (6,0) {\textbf{4}};
\node (G1) at (6,1) {\textbf{2}};
\node (H1) at (6,2) {\textbf{5}};
\node (I1) at (6,3) {\textbf{3}};
\end{scope}

\begin{scope}[>={[black]},
every node/.style={fill=white,circle},
every edge/.style={draw=black}]
\path [->] (A) edge (B);
\path [->] (B) edge (C);
\path [->] (C) edge (D);
\path [->] (F) edge (G);
\path [->] (G) edge (H);
\path [->] (H) edge (I);;
\path [->] (K) edge (L);
\path [->] (L) edge (M);
\path [->] (M) edge (N);
\path [->] (P) edge (Q);
\path [->] (Q) edge (R);
\path [->] (R) edge (S);
\path [->] (T) edge (U);
\path [->] (U) edge (V);
\path [->] (V) edge (W);
\path [->] (A1) edge (B1);
\path [->] (B1) edge (C1);
\path [->] (C1) edge (D1);
\path [->] (F1) edge (G1);
\path [->] (G1) edge (H1);
\path [->] (H1) edge (I1);
\path [->] (A) edge (F);
\path [->] (F) edge (K);
\path [->] (K) edge (P);
\path [->] (T) edge (A1);
\path [->] (A1) edge (F1);
\path [->] (B) edge (G);
\path [->] (G) edge (L);
\path [->] (L) edge (Q);
\path [->] (U) edge (B1);
\path [->] (B1) edge (G1);
\path [->] (C) edge (H);
\path [->] (H) edge (M);
\path [->] (M) edge (R);
\path [->] (V) edge (C1);
\path [->] (C1) edge (H1);
\path [->] (D) edge (I);
\path [->] (I) edge (N);
\path [->] (N) edge (S);
\path [->] (W) edge (D1);
\path [->] (D1) edge (I1);
\path [->] (P) edge (T);
\path [->] (Q) edge (U);
\path [->] (R) edge (V);
\path [->] (S) edge (W);
\path [->] (A) edge[bend left=20] (D);
\path [->] (F) edge[bend right=20] (I);
\path [->] (K) edge[bend right=20] (N);
\path [->] (P) edge[bend right=20] (S);
\path [->] (T) edge[bend right=20] (W);
\path [->] (A1) edge[bend right=20] (D1);
\path [->] (F1) edge[bend right=20] (I1);
\path [->] (A) edge[bend right=20] (F1);
\path [->] (B) edge[bend right=20] (G1);
\path [->] (C) edge[bend right=20] (H1);
\path [->] (D) edge[bend left=20] (I1);
\end{scope}
\end{tikzpicture}
\end{center}
\vspace{-.5cm}
\hspace*{2.7cm}$(iii)$ \hspace*{7.8cm}$(iv)$

\begin{center}
	\begin{tikzpicture}
	\begin{scope}[every node/.style={circle,thick,draw,scale=0.5}]
	\node (A) at (0,0) {\textbf{4}};
	\node (B) at (0,1) {\textbf{2}};
	\node (C) at (0,2) {\textbf{4}};
	\node (D) at (0,3) {\textbf{1}};
	\node (F) at (1,0) {\textbf{5}};
	\node (G) at (1,1) {\textbf{3}};
	\node (H) at (1,2) {\textbf{5}};
	\node (I) at (1,3) {\textbf{2}};
	\node (K) at (2,0) {\textbf{1}};
	\node (L) at (2,1) {\textbf{4}};
	\node (M) at (2,2) {\textbf{1}};
	\node (N) at (2,3) {\textbf{3}};
	\node (P) at (3,0) {\textbf{2}};
	\node (Q) at (3,1) {\textbf{5}};
	\node (R) at (3,2) {\textbf{2}};
	\node (S) at (3,3) {\textbf{4}};
	\node (T) at (4,0) {\textbf{3}};
	\node (U) at (4,1) {\textbf{1}};
	\node (V) at (4,2) {\textbf{3}};
	\node (W) at (4,3) {\textbf{5}};
	\node (A1) at (5,0) {\textbf{4}};
	\node (B1) at (5,1) {\textbf{2}};
	\node (C1) at (5,2) {\textbf{4}};
	\node (D1) at (5,3) {\textbf{1}};
	\node (F1) at (6,0) {\textbf{5}};
	\node (G1) at (6,1) {\textbf{3}};
	\node (H1) at (6,2) {\textbf{5}};
	\node (I1) at (6,3) {\textbf{2}};
	\node (K1) at (7,0) {\textbf{1}};
	\node (L1) at (7,1) {\textbf{4}};
	\node (M1) at (7,2) {\textbf{1}};
	\node (N1) at (7,3) {\textbf{3}};
	\node (P1) at (8,0) {\textbf{2}};
	\node (Q1) at (8,1) {\textbf{5}};
	\node (R1) at (8,2) {\textbf{2}};
	\node (S1) at (8,3) {\textbf{4}};
	\node (T1) at (9,0) {\textbf{1}};
	\node (U1) at (9,1) {\textbf{4}};
	\node (V1) at (9,2) {\textbf{1}};
	\node (W1) at (9,3) {\textbf{3}};
	\node (A2) at (10,0) {\textbf{5}};
	\node (B2) at (10,1) {\textbf{3}};
	\node (C2) at (10,2) {\textbf{5}};
	\node (D2) at (10,3) {\textbf{2}};
	\end{scope}
	
	\begin{scope}[>={[black]},
	every node/.style={fill=white,circle},
	every edge/.style={draw=black}]
	\path [->] (A) edge (B);
	\path [->] (B) edge (C);
	\path [->] (C) edge (D);
	\path [->] (F) edge (G);
	\path [->] (G) edge (H);
	\path [->] (H) edge (I);;
	\path [->] (K) edge (L);
	\path [->] (L) edge (M);
	\path [->] (M) edge (N);
	\path [->] (P) edge (Q);
	\path [->] (Q) edge (R);
	\path [->] (R) edge (S);
	\path [->] (T) edge (U);
	\path [->] (U) edge (V);
	\path [->] (V) edge (W);
	\path [->] (A1) edge (B1);
	\path [->] (B1) edge (C1);
	\path [->] (C1) edge (D1);
	\path [->] (F1) edge (G1);
	\path [->] (G1) edge (H1);
	\path [->] (H1) edge (I1);
	\path [->] (K1) edge (L1);
	\path [->] (L1) edge (M1);
	\path [->] (M1) edge (N1);
	\path [->] (P1) edge (Q1);
	\path [->] (Q1) edge (R1);
	\path [->] (R1) edge (S1);
	\path [->] (T1) edge (U1);
	\path [->] (U1) edge (V1);
	\path [->] (V1) edge (W1);
	\path [->] (A2) edge (B2);
	\path [->] (B2) edge (C2);
	\path [->] (C2) edge (D2);
	\path [->] (A) edge (F);
	\path [->] (F) edge (K);
	\path [->] (K) edge (P);
	\path [->] (T) edge (A1);
	\path [->] (A1) edge (F1);
	\path [->] (B) edge (G);
	\path [->] (G) edge (L);
	\path [->] (L) edge (Q);
	\path [->] (U) edge (B1);
	\path [->] (B1) edge (G1);
	\path [->] (C) edge (H);
	\path [->] (H) edge (M);
	\path [->] (M) edge (R);
	\path [->] (V) edge (C1);
	\path [->] (C1) edge (H1);
	\path [->] (D) edge (I);
	\path [->] (I) edge (N);
	\path [->] (N) edge (S);
	\path [->] (W) edge (D1);
	\path [->] (D1) edge (I1);
	\path [->] (P) edge (T);
	\path [->] (Q) edge (U);
	\path [->] (R) edge (V);
	\path [->] (S) edge (W);
	\path [->] (F1) edge (K1);
	\path [->] (K1) edge (P1);
	\path [->] (P1) edge (T1);
	\path [->] (T1) edge (A2);
	\path [->] (G1) edge (L1);
	\path [->] (L1) edge (Q1);
	\path [->] (Q1) edge (U1);
	\path [->] (U1) edge (B2);
	\path [->] (H1) edge (M1);
	\path [->] (M1) edge (R1);
	\path [->] (R1) edge (V1);
	\path [->] (V1) edge (C2);
	\path [->] (I1) edge (N1);
	\path [->] (N1) edge (S1);
	\path [->] (S1) edge (W1);
	\path [->] (W1) edge (D2);
	\path [->] (A) edge[bend left=20] (D);
	\path [->] (F) edge[bend right=20] (I);
	\path [->] (K) edge[bend right=20] (N);
	\path [->] (P) edge[bend right=20] (S);
	\path [->] (T) edge[bend right=20] (W);
	\path [->] (A1) edge[bend right=20] (D1);
	\path [->] (F1) edge[bend right=20] (I1);
	\path [->] (K1) edge[bend right=20] (N1);
	\path [->] (P1) edge[bend right=20] (S1);
	\path [->] (T1) edge[bend right=20] (W1);
	\path [->] (A2) edge[bend right=20] (D2);
	\path [->] (A) edge[bend right=10] (A2);
	\path [->] (B) edge[bend right=10] (B2);
	\path [->] (C) edge[bend right=10] (C2);
	\path [->] (D) edge[bend left=10] (D2);
	\end{scope}
	\end{tikzpicture}

$(v)$
\end{center}

\begin{center}
Figure $3$:  $5$-star colorings of $C_4\square C_n$, for $n=4,5,6,7,11$
\end{center}

}  
\end{proof}


\begin{thm}\label{5n}
{For every natural number $n\ge 4$, $\chi_s(C_5\square C_n)=5$.
}\end{thm}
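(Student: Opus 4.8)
The plan is to establish the lower bound cheaply and then concentrate all effort on the upper bound $\chi_s(C_5\square C_n)\le 5$, exactly in the spirit of Theorems \ref{3n} and \ref{4n}. For the lower bound, Theorem \ref{fertin} already gives $\chi_s(C_5\square C_n)\ge 5$ for all $n\ge 3$, so nothing new is needed there. Before building colorings I would dispose of the smallest case by commutativity of the Cartesian product: $C_5\square C_4\cong C_4\square C_5$, so Theorem \ref{4n} yields $\chi_s(C_5\square C_4)=5$ and we may assume $n\ge 5$ throughout. It is also worth recording the clean backbone construction for the divisible case: when $5\mid n$ the linear coloring $c(i,j)=(i+2j)\bmod 5$ (rows indexed by $i\in\mathbb{Z}_5$, columns by $j\in\mathbb{Z}_n$) is well defined on the torus precisely because $2n\equiv 0\pmod 5$, it is proper since the four step-deltas $\{\pm 1,\pm 2\}$ are nonzero mod $5$, and it is star because a bicolored $P_4$ would force the middle step-delta to be the negative of the first, i.e. the second move to reverse the first, which is a backtrack and not a simple path. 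This settles $n\in\{5,10,15,\dots\}$, in particular $C_5\square C_5$.

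For the general upper bound I would reuse the tiling-plus-Sylvester machinery. The idea is to exhibit (in figures) two building blocks for the cylinder $C_5\square P_w$ of coprime widths $r$ and $s$ with $\gcd(r,s)=1$, each internally star-colored, designed so that (i) concatenating any two blocks creates no bicolored $P_4$ across the seam, and (ii) all blocks begin with the same initial few columns. Property (ii) is the crucial device already used in Theorems \ref{3n} and \ref{4n}: it guarantees that when the blocks are laid cyclically around $C_n$ the final wrap-around seam (last block meeting the first) looks locally identical to an internal seam, so it is automatically valid. By Theorem \ref{prime} every $n\ge (r-1)(s-1)$ can be written as $n=\alpha r+\beta s$ with $\alpha,\beta\ge 0$, and laying $\alpha$ copies of the $r$-block and $\beta$ copies of the $s$-block produces a $5$-star coloring of $C_5\square C_n$ for all such $n$.

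The remaining work is the finite set of exceptional $n$ below the Sylvester threshold $(r-1)(s-1)$ that are not themselves nonnegative combinations of the chosen widths; these I would handle directly by displaying explicit $5$-star colorings (as in Figures $2$ and $3$) and invoking Theorem \ref{fertin} for the matching lower bound. I expect the main obstacle to be the block design and the verification of the seam and wrap-around conditions, because the $C_5$ columns close up vertically and $C_5$ is exactly the hardest cycle for star coloring ($\chi_s(C_5)=4$); indeed the fact that $C_3\square C_5$ already requires $6$ colors signals that the local interactions around a $C_5$ factor are tight. Concretely, one must check that no bicolored $P_4$ arises among the vertical edges (including the wrap edge of each $C_5$ column), the horizontal edges, or the mixed paths spanning a seam, and that the handful of small exceptional widths admit valid patterns — this case analysis, rather than any single conceptual step, is where the real effort lies.
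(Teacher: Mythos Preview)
Your plan is correct and mirrors the paper's proof almost exactly: the paper reduces to $n\ge 5$ via Theorem~\ref{4n}, takes building blocks of widths $4$ and $5$ (Figure~$4\,(i),(ii)$) sharing their first three columns so that Theorem~\ref{prime} handles every $n\ge 12$ as well as $n\in\{4,5,8,9,10\}$, and then disposes of the exceptional widths $n\in\{6,7,11\}$ by explicit figures. Your linear coloring $c(i,j)=(i+2j)\bmod 5$ for the case $5\mid n$ is a correct and pleasant extra, though the paper does not isolate that subcase.
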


\begin{proof} 
{By Theorem \ref{4n}, we can assume that $n\ge 5$.  Theorem \ref{prime} states that every $n \ge 12$ is a linear combination of $4$ and $5$ with non-negative integer coefficients. Now, since 
the colors of the first three columns of Figure $4 ,\, (i)$ and $(ii)$ are the same, by considering some suitable copies of Figure $4 ,\, (i)$ and $(ii)$, one may obtain a $5$-star coloring of $C_5\square C_n$, for $n\ge 12$.
Notice that every integer $n=4,5,8,9,10 $, is a linear combination of $\{4,5\}$ and also for   $n=6,7,11 $, by considering Figure $4,\,(iii)$, $(iv)$ and $(v)$ and using Theorem \ref{fertin}, the proof is complete.

\begin{center}
\begin{tikzpicture}

\begin{scope}[every node/.style={circle,thick,draw,scale=0.5}]
\node (A) at (0,0) {\textbf{3}};
\node (B) at (0,1) {\textbf{5}};
\node (C) at (0,2) {\textbf{2}};
\node (D) at (0,3) {\textbf{4}};
\node (E) at (0,4) {\textbf{1}};
\node (F) at (1,0) {\textbf{4}};
\node (G) at (1,1) {\textbf{1}};
\node (H) at (1,2) {\textbf{3}};
\node (I) at (1,3) {\textbf{5}};
\node (J) at (1,4) {\textbf{2}};
\node (K) at (2,0) {\textbf{5}};
\node (L) at (2,1) {\textbf{2}};
\node (M) at (2,2) {\textbf{4}};
\node (N) at (2,3) {\textbf{1}};
\node (O) at (2,4) {\textbf{3}};
\node (P) at (3,0) {\textbf{4}};
\node (Q) at (3,1) {\textbf{1}};
\node (R) at (3,2) {\textbf{3}};
\node (S) at (3,3) {\textbf{5}};
\node (T) at (3,4) {\textbf{2}};
\end{scope}

\begin{scope}[>={[black]},
every node/.style={fill=white,circle},
every edge/.style={draw=black}]
\path [->] (A) edge (B);
\path [->] (B) edge (C);
\path [->] (C) edge (D);
\path [->] (D) edge (E);
\path [->] (F) edge (G);
\path [->] (G) edge (H);
\path [->] (H) edge (I);
\path [->] (I) edge (J);
\path [->] (K) edge (L);
\path [->] (L) edge (M);
\path [->] (M) edge (N);
\path [->] (N) edge (O);
\path [->] (P) edge (Q);
\path [->] (Q) edge (R);
\path [->] (R) edge (S);
\path [->] (S) edge (T);
\path [->] (A) edge (F);
\path [->] (F) edge (K);
\path [->] (K) edge (P);
\path [->] (B) edge (G);
\path [->] (G) edge (L);
\path [->] (L) edge (Q);
\path [->] (C) edge (H);
\path [->] (H) edge (M);
\path [->] (M) edge (R);
\path [->] (D) edge (I);
\path [->] (I) edge (N);
\path [->] (N) edge (S);
\path [->] (E) edge (J);
\path [->] (J) edge (O);
\path [->] (O) edge (T);
\path [->] (A) edge[bend left=20] (E);
\path [->] (F) edge[bend right=20] (J);
\path [->] (K) edge[bend right=20] (O);
\path [->] (P) edge[bend right=20] (T);
\path [->] (A) edge[bend right=20] (P);
\path [->] (B) edge[bend right=20] (Q);
\path [->] (C) edge[bend right=20] (R);
\path [->] (D) edge[bend right=20] (S);
\path [->] (E) edge[bend left=20] (T);
\end{scope}

\end{tikzpicture}
\hspace{3.5cm}
\begin{tikzpicture}

\begin{scope}[every node/.style={circle,thick,draw,scale=0.5}]
\node (A) at (0,0) {\textbf{3}};
\node (B) at (0,1) {\textbf{5}};
\node (C) at (0,2) {\textbf{2}};
\node (D) at (0,3) {\textbf{4}};
\node (E) at (0,4) {\textbf{1}};
\node (F) at (1,0) {\textbf{4}};
\node (G) at (1,1) {\textbf{1}};
\node (H) at (1,2) {\textbf{3}};
\node (I) at (1,3) {\textbf{5}};
\node (J) at (1,4) {\textbf{2}};
\node (K) at (2,0) {\textbf{5}};
\node (L) at (2,1) {\textbf{2}};
\node (M) at (2,2) {\textbf{4}};
\node (N) at (2,3) {\textbf{1}};
\node (O) at (2,4) {\textbf{3}};
\node (P) at (3,0) {\textbf{1}};
\node (Q) at (3,1) {\textbf{3}};
\node (R) at (3,2) {\textbf{5}};
\node (S) at (3,3) {\textbf{2}};
\node (T) at (3,4) {\textbf{4}};
\node (U) at (4,0) {\textbf{2}};
\node (V) at (4,1) {\textbf{4}};
\node (W) at (4,2) {\textbf{1}};
\node (X) at (4,3) {\textbf{3}};
\node (Y) at (4,4) {\textbf{5}};
\end{scope}

\begin{scope}[>={[black]},
every node/.style={fill=white,circle},
every edge/.style={draw=black}]
\path [->] (A) edge (B);
\path [->] (B) edge (C);
\path [->] (C) edge (D);
\path [->] (D) edge (E);
\path [->] (F) edge (G);
\path [->] (G) edge (H);
\path [->] (H) edge (I);
\path [->] (I) edge (J);
\path [->] (K) edge (L);
\path [->] (L) edge (M);
\path [->] (M) edge (N);
\path [->] (N) edge (O);
\path [->] (P) edge (Q);
\path [->] (Q) edge (R);
\path [->] (R) edge (S);
\path [->] (S) edge (T);
\path [->] (U) edge (V);
\path [->] (V) edge (W);
\path [->] (W) edge (X);
\path [->] (X) edge (Y);
\path [->] (A) edge (F);
\path [->] (F) edge (K);
\path [->] (K) edge (P);
\path [->] (P) edge (U);
\path [->] (B) edge (G);
\path [->] (G) edge (L);
\path [->] (L) edge (Q);
\path [->] (Q) edge (V);
\path [->] (C) edge (H);
\path [->] (H) edge (M);
\path [->] (M) edge (R);
\path [->] (R) edge (W);
\path [->] (D) edge (I);
\path [->] (I) edge (N);
\path [->] (N) edge (S);
\path [->] (S) edge (X);
\path [->] (E) edge (J);
\path [->] (J) edge (O);
\path [->] (O) edge (T);
\path [->] (T) edge (Y);
\path [->] (A) edge[bend left=20] (E);
\path [->] (F) edge[bend right=20] (J);
\path [->] (K) edge[bend right=20] (O);
\path [->] (P) edge[bend right=20] (T);
\path [->] (U) edge[bend right=20] (Y);
\path [->] (A) edge[bend right=20] (U);
\path [->] (B) edge[bend right=20] (V);
\path [->] (C) edge[bend right=20] (W);
\path [->] (D) edge[bend right=20] (X);
\path [->] (E) edge[bend left=20] (Y);
\end{scope}
\end{tikzpicture}
\end{center}
\vspace{-.5cm}
\hspace*{2.9cm}$(i)$ \hspace*{7.7cm}$(ii)$

\begin{center}
	\begin{tikzpicture}
	\begin{scope}[every node/.style={circle,thick,draw,scale=0.5}]
	\node (A) at (0,0) {\textbf{2}};
	\node (B) at (0,1) {\textbf{3}};
	\node (C) at (0,2) {\textbf{5}};
	\node (D) at (0,3) {\textbf{2}};
	\node (E) at (0,4) {\textbf{1}};
	\node (F) at (1,0) {\textbf{5}};
	\node (G) at (1,1) {\textbf{2}};
	\node (H) at (1,2) {\textbf{1}};
	\node (I) at (1,3) {\textbf{4}};
	\node (J) at (1,4) {\textbf{3}};
	\node (K) at (2,0) {\textbf{2}};
	\node (L) at (2,1) {\textbf{4}};
	\node (M) at (2,2) {\textbf{3}};
	\node (N) at (2,3) {\textbf{2}};
	\node (O) at (2,4) {\textbf{1}};
	\node (P) at (3,0) {\textbf{3}};
	\node (Q) at (3,1) {\textbf{2}};
	\node (R) at (3,2) {\textbf{1}};
	\node (S) at (3,3) {\textbf{5}};
	\node (S1) at (3,4) {\textbf{4}};
	\node (T) at (4,0) {\textbf{2}};
	\node (U) at (4,1) {\textbf{5}};
	\node (V) at (4,2) {\textbf{4}};
	\node (W) at (4,3) {\textbf{2}};
	\node (X) at (4,4) {\textbf{1}};
	\node (A1) at (5,0) {\textbf{4}};
	\node (B1) at (5,1) {\textbf{2}};
	\node (C1) at (5,2) {\textbf{1}};
	\node (D1) at (5,3) {\textbf{3}};
	\node (E1) at (5,4) {\textbf{5}};
	\end{scope}
	
	\begin{scope}[>={[black]},
	every node/.style={fill=white,circle},
	every edge/.style={draw=black}]
	\path [->] (A) edge (B);
	\path [->] (B) edge (C);
	\path [->] (C) edge (D);
	\path [->] (D) edge (E);
	\path [->] (F) edge (G);
	\path [->] (G) edge (H);
	\path [->] (H) edge (I);
	\path [->] (I) edge (J);
	\path [->] (K) edge (L);
	\path [->] (L) edge (M);
	\path [->] (M) edge (N);
	\path [->] (N) edge (O);
	\path [->] (P) edge (Q);
	\path [->] (Q) edge (R);
	\path [->] (R) edge (S);
	\path [->] (S) edge (S1);
	\path [->] (T) edge (U);
	\path [->] (U) edge (V);
	\path [->] (V) edge (W);
	\path [->] (W) edge (X);
	\path [->] (A1) edge (B1);
	\path [->] (B1) edge (C1);
	\path [->] (C1) edge (D1);
	\path [->] (D1) edge (E1);
	\path [->] (A) edge (F);
	\path [->] (F) edge (K);
	\path [->] (K) edge (P);
	\path [->] (T) edge (A1);
	\path [->] (B) edge (G);
	\path [->] (G) edge (L);
	\path [->] (L) edge (Q);
	\path [->] (U) edge (B1);
	\path [->] (C) edge (H);
	\path [->] (H) edge (M);
	\path [->] (M) edge (R);
	\path [->] (V) edge (C1);
	\path [->] (D) edge (I);
	\path [->] (I) edge (N);
	\path [->] (N) edge (S);
	\path [->] (W) edge (D1);
	\path [->] (P) edge (T);
	\path [->] (Q) edge (U);
	\path [->] (R) edge (V);
	\path [->] (S) edge (W);
	\path [->] (E) edge (J);
	\path [->] (J) edge (O);
	\path [->] (O) edge (S1);
	\path [->] (S1) edge (X);
	\path [->] (X) edge (E1);
	\path [->] (A) edge[bend left=20] (E);
	\path [->] (F) edge[bend right=20] (J);
	\path [->] (K) edge[bend right=20] (O);
	\path [->] (P) edge[bend right=20] (S1);
	\path [->] (T) edge[bend right=20] (X);
	\path [->] (A1) edge[bend right=20] (E1);
	\path [->] (A) edge[bend right=20] (A1);
	\path [->] (B) edge[bend right=20] (B1);
	\path [->] (C) edge[bend right=20] (C1);
	\path [->] (D) edge[bend right=20] (D1);
	\path [->] (E) edge[bend left=20] (E1);
	\end{scope}
	\end{tikzpicture}
	\hspace{.5cm}
	\begin{tikzpicture}
	\begin{scope}[every node/.style={circle,thick,draw,scale=0.5}]
	\node (A) at (0,0) {\textbf{2}};
	\node (B) at (0,1) {\textbf{1}};
	\node (C) at (0,2) {\textbf{4}};
	\node (D) at (0,3) {\textbf{5}};
	\node (E) at (0,4) {\textbf{1}};
	\node (F) at (1,0) {\textbf{5}};
	\node (G) at (1,1) {\textbf{4}};
	\node (H) at (1,2) {\textbf{2}};
	\node (I) at (1,3) {\textbf{1}};
	\node (J) at (1,4) {\textbf{3}};
	\node (K) at (2,0) {\textbf{1}};
	\node (L) at (2,1) {\textbf{5}};
	\node (M) at (2,2) {\textbf{3}};
	\node (N) at (2,3) {\textbf{2}};
	\node (O) at (2,4) {\textbf{4}};
	\node (P) at (3,0) {\textbf{5}};
	\node (Q) at (3,1) {\textbf{2}};
	\node (R) at (3,2) {\textbf{1}};
	\node (S) at (3,3) {\textbf{5}};
	\node (S1) at (3,4) {\textbf{3}};
	\node (T) at (4,0) {\textbf{4}};
	\node (U) at (4,1) {\textbf{3}};
	\node (V) at (4,2) {\textbf{5}};
	\node (W) at (4,3) {\textbf{4}};
	\node (X) at (4,4) {\textbf{2}};
	\node (A1) at (5,0) {\textbf{1}};
	\node (B1) at (5,1) {\textbf{4}};
	\node (C1) at (5,2) {\textbf{2}};
	\node (D1) at (5,3) {\textbf{1}};
	\node (E1) at (5,4) {\textbf{5}};
	\node (F1) at (6,0) {\textbf{3}};
	\node (G1) at (6,1) {\textbf{5}};
	\node (H1) at (6,2) {\textbf{3}};
	\node (I1) at (6,3) {\textbf{2}};
	\node (J1) at (6,4) {\textbf{4}};
	\end{scope}
	
	\begin{scope}[>={[black]},
	every node/.style={fill=white,circle},
	every edge/.style={draw=black}]
	\path [->] (A) edge (B);
	\path [->] (B) edge (C);
	\path [->] (C) edge (D);
	\path [->] (D) edge (E);
	\path [->] (F) edge (G);
	\path [->] (G) edge (H);
	\path [->] (H) edge (I);
	\path [->] (I) edge (J);
	\path [->] (K) edge (L);
	\path [->] (L) edge (M);
	\path [->] (M) edge (N);
	\path [->] (N) edge (O);
	\path [->] (P) edge (Q);
	\path [->] (Q) edge (R);
	\path [->] (R) edge (S);
	\path [->] (S) edge (S1);
	\path [->] (T) edge (U);
	\path [->] (U) edge (V);
	\path [->] (V) edge (W);
	\path [->] (W) edge (X);
	\path [->] (A1) edge (B1);
	\path [->] (B1) edge (C1);
	\path [->] (C1) edge (D1);
	\path [->] (D1) edge (E1);
	\path [->] (F1) edge (G1);
	\path [->] (G1) edge (H1);
	\path [->] (H1) edge (I1);
	\path [->] (I1) edge (J1);
	\path [->] (A) edge (F);
	\path [->] (F) edge (K);
	\path [->] (K) edge (P);
	\path [->] (T) edge (A1);
	\path [->] (A1) edge (F1);
	\path [->] (B) edge (G);
	\path [->] (G) edge (L);
	\path [->] (L) edge (Q);
	\path [->] (U) edge (B1);
	\path [->] (B1) edge (G1);
	\path [->] (C) edge (H);
	\path [->] (H) edge (M);
	\path [->] (M) edge (R);
	\path [->] (V) edge (C1);
	\path [->] (C1) edge (H1);
	\path [->] (D) edge (I);
	\path [->] (I) edge (N);
	\path [->] (N) edge (S);
	\path [->] (W) edge (D1);
	\path [->] (D1) edge (I1);
	\path [->] (P) edge (T);
	\path [->] (Q) edge (U);
	\path [->] (R) edge (V);
	\path [->] (S) edge (W);
	\path [->] (E) edge (J);
	\path [->] (J) edge (O);
	\path [->] (O) edge (S1);
	\path [->] (S1) edge (X);
	\path [->] (X) edge (E1);
	\path [->] (E1) edge (J1);
	\path [->] (A) edge[bend left=20] (E);
	\path [->] (F) edge[bend right=20] (J);
	\path [->] (K) edge[bend right=20] (O);
	\path [->] (P) edge[bend right=20] (S1);
	\path [->] (T) edge[bend right=20] (X);
	\path [->] (A1) edge[bend right=20] (E1);
	\path [->] (F1) edge[bend right=20] (J1);
	\path [->] (A) edge[bend right=20] (F1);
	\path [->] (B) edge[bend right=20] (G1);
	\path [->] (C) edge[bend right=20] (H1);
	\path [->] (D) edge[bend right=20] (I1);
	\path [->] (E) edge[bend left=20] (J1);
	\end{scope}
	\end{tikzpicture}
\end{center}

\vspace{-.5cm}
\hspace*{2.7cm}$(iii)$ \hspace*{6.5cm}$(iv)$

\begin{center}
	\begin{tikzpicture}
	\begin{scope}[every node/.style={circle,thick,draw,scale=0.5}]
	\node (A) at (0,0) {\textbf{3}};
	\node (B) at (0,1) {\textbf{5}};
	\node (C) at (0,2) {\textbf{2}};
	\node (D) at (0,3) {\textbf{4}};
	\node (E) at (0,4) {\textbf{1}};
	\node (J) at (1,0) {\textbf{4}};
	\node (K) at (1,1) {\textbf{1}};
	\node (L) at (1,2) {\textbf{3}};
	\node (M) at (1,3) {\textbf{5}};
	\node (N) at (1,4) {\textbf{2}};
	\node (S) at (2,0) {\textbf{5}};
	\node (T) at (2,1) {\textbf{2}};
	\node (U) at (2,2) {\textbf{4}};
	\node (V) at (2,3) {\textbf{1}};
	\node (W) at (2,4) {\textbf{3}};
	\node (BB) at (3,0) {\textbf{1}};
	\node (CC) at (3,1) {\textbf{3}};
	\node (DD) at (3,2) {\textbf{5}};
	\node (EE) at (3,3) {\textbf{2}};
	\node (FF) at (3,4) {\textbf{4}};
	\node (KK) at (4,0) {\textbf{2}};
	\node (LL) at (4,1) {\textbf{4}};
	\node (MM) at (4,2) {\textbf{1}};
	\node (NN) at (4,3) {\textbf{3}};
	\node (OO) at (4,4) {\textbf{5}};
	\node (TT) at (5,0) {\textbf{3}};
	\node (UU) at (5,1) {\textbf{5}};
	\node (VV) at (5,2) {\textbf{2}};
	\node (WW) at (5,3) {\textbf{4}};
	\node (XX) at (5,4) {\textbf{1}};
	\node (C1) at (6,0) {\textbf{4}};
	\node (D1) at (6,1) {\textbf{1}};
	\node (E1) at (6,2) {\textbf{3}};
	\node (F1) at (6,3) {\textbf{5}};
	\node (G1) at (6,4) {\textbf{2}};
	\node (L1) at (7,0) {\textbf{5}};
	\node (M1) at (7,1) {\textbf{2}};
	\node (N1) at (7,2) {\textbf{4}};
	\node (O1) at (7,3) {\textbf{1}};
	\node (P1) at (7,4) {\textbf{3}};
	\node (U1) at (8,0) {\textbf{1}};
	\node (V1) at (8,1) {\textbf{3}};
	\node (W1) at (8,2) {\textbf{5}};
	\node (X1) at (8,3) {\textbf{2}};
	\node (Y1) at (8,4) {\textbf{4}};
	\node (D2) at (9,0) {\textbf{5}};
	\node (E2) at (9,1) {\textbf{2}};
	\node (F2) at (9,2) {\textbf{4}};
	\node (G2) at (9,3) {\textbf{1}};
	\node (H2) at (9,4) {\textbf{3}};
	\node (M2) at (10,0) {\textbf{4}};
	\node (N2) at (10,1) {\textbf{1}};
	\node (O2) at (10,2) {\textbf{3}};
	\node (P2) at (10,3) {\textbf{5}};
	\node (Q2) at (10,4) {\textbf{2}};
	\end{scope}
	
	\begin{scope}[>={[black]},
	every node/.style={fill=white,circle},
	every edge/.style={draw=black}]
	\path [->] (A) edge (B);
	\path [->] (B) edge (C);
	\path [->] (C) edge (D);
	\path [->] (D) edge (E);
	\path [->] (J) edge (K);
	\path [->] (K) edge (L);
	\path [->] (L) edge (M);
	\path [->] (M) edge (N);
	\path [->] (N) edge (O);
	\path [->] (S) edge (T);
	\path [->] (T) edge (U);
	\path [->] (U) edge (V);
	\path [->] (V) edge (W);
	\path [->] (BB) edge (CC);
	\path [->] (CC) edge (DD);
	\path [->] (DD) edge (EE);
	\path [->] (EE) edge (FF);
	\path [->] (KK) edge (LL);
	\path [->] (LL) edge (MM);
	\path [->] (MM) edge (NN);
	\path [->] (NN) edge (OO);
	\path [->] (TT) edge (UU);
	\path [->] (UU) edge (VV);
	\path [->] (VV) edge (WW);
	\path [->] (WW) edge (XX);
	\path [->] (C1) edge (D1);
	\path [->] (D1) edge (E1);
	\path [->] (E1) edge (F1);
	\path [->] (F1) edge (G1);
	\path [->] (L1) edge (M1);
	\path [->] (M1) edge (N1);
	\path [->] (N1) edge (O1);
	\path [->] (O1) edge (P1);
	\path [->] (U1) edge (V1);
	\path [->] (V1) edge (W1);
	\path [->] (W1) edge (X1);
	\path [->] (X1) edge (Y1);
	\path [->] (D2) edge (E2);
	\path [->] (E2) edge (F2);
	\path [->] (F2) edge (G2);
	\path [->] (G2) edge (H2);
	\path [->] (M2) edge (N2);
	\path [->] (N2) edge (O2);
	\path [->] (O2) edge (P2);
	\path [->] (P2) edge (Q2);
	\path [->] (A) edge (J);
	\path [->] (J) edge (S);
	\path [->] (S) edge (BB);
	\path [->] (BB) edge (KK);
	\path [->] (KK) edge (TT);
	\path [->] (TT) edge (C1);
	\path [->] (C1) edge (L1);
	\path [->] (L1) edge (U1);
	\path [->] (U1) edge (D2);
	\path [->] (D2) edge (M2);
	\path [->] (B) edge (K);
	\path [->] (K) edge (T);
	\path [->] (T) edge (CC);
	\path [->] (CC) edge (LL);
	\path [->] (LL) edge (UU);
	\path [->] (UU) edge (D1);
	\path [->] (D1) edge (M1);
	\path [->] (M1) edge (V1);
	\path [->] (V1) edge (E2);
	\path [->] (E2) edge (N2);
	\path [->] (C) edge (L);
	\path [->] (L) edge (U);
	\path [->] (U) edge (DD);
	\path [->] (DD) edge (MM);
	\path [->] (MM) edge (VV);
	\path [->] (VV) edge (E1);
	\path [->] (E1) edge (N1);
	\path [->] (N1) edge (W1);
	\path [->] (W1) edge (F2);
	\path [->] (F2) edge (O2);
	\path [->] (D) edge (M);
	\path [->] (M) edge (V);
	\path [->] (V) edge (EE);
	\path [->] (EE) edge (NN);
	\path [->] (NN) edge (WW);
	\path [->] (WW) edge (F1);
	\path [->] (F1) edge (O1);
	\path [->] (O1) edge (X1);
	\path [->] (X1) edge (G2);
	\path [->] (G2) edge (P2);
	\path [->] (E) edge (N);
	\path [->] (N) edge (W);
	\path [->] (W) edge (FF);
	\path [->] (FF) edge (OO);
	\path [->] (OO) edge (XX);
	\path [->] (XX) edge (G1);
	\path [->] (G1) edge (P1);
	\path [->] (P1) edge (Y1);
	\path [->] (Y1) edge (H2);
	\path [->] (H2) edge (Q2);
	\path [->] (A) edge[bend left=20] (E);
	\path [->] (J) edge[bend right=20] (N);
	\path [->] (S) edge[bend right=20] (W);
	\path [->] (BB) edge[bend right=20] (FF);
	\path [->] (KK) edge[bend right=20] (OO);
	\path [->] (TT) edge[bend right=20] (XX);
	\path [->] (C1) edge[bend right=20] (G1);
	\path [->] (L1) edge[bend right=20] (P1);
	\path [->] (U1) edge[bend right=20] (Y1);
	\path [->] (D2) edge[bend right=20] (H2);
	\path [->] (M2) edge[bend right=20] (Q2);
	\path [->] (A) edge[bend right=10] (M2);
	\path [->] (B) edge[bend right=10] (N2);
	\path [->] (C) edge[bend right=10] (O2);
	\path [->] (D) edge[bend right=10] (P2);
	\path [->] (E) edge[bend left=10] (Q2);
	\end{scope}
	\end{tikzpicture}
	
	$(v)$
\end{center}

\begin{center}
Figure $4$: $5$-star colorings of $C_5\square C_n$, for  $n=4,5,6,7,11$
\end{center}

}
\end{proof}


\begin{thm}\label{6n}
{For every two natural numbers $m$ and $n\ge 3$, $m\in \{6,8,9,10\}$, $\chi_s(C_m\square C_n)=5$.
}\end{thm}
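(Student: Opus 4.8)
The plan is to follow the template already used in Theorems \ref{4n} and \ref{5n}: reduce the small values of $n$ to earlier results by commutativity, use Sylvester's theorem (Theorem \ref{prime}) to handle all large $n$ by gluing two base patterns, and dispose of the few exceptional widths by explicit colorings.

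First I would fix $m\in\{6,8,9,10\}$ and record that the lower bound $\chi_s(C_m\square C_n)\ge 5$ is immediate from Theorem \ref{fertin}, so the whole task is to produce a $5$-star coloring for every $n\ge 3$. Using the commutativity $C_m\square C_n\cong C_n\square C_m$, the cases $n\in\{3,4,5\}$ are already settled: $n=3$ by Theorem \ref{3n} (since $m\ge 4$ and $m\ne 5$), $n=4$ by Theorem \ref{4n}, and $n=5$ by Theorem \ref{5n} (since $m\ge 4$). Hence I may assume $n\ge 6$.

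The engine for large $n$ is the same gluing argument as before. For each $m$ I would exhibit two base patterns, a $5$-star coloring of $C_m\square C_4$ and one of $C_m\square C_5$, drawn so that their first three columns coincide. Because any $P_4$ occupies at most four consecutive columns, the three shared columns guarantee that every four-column window straddling a seam (and the final horizontal wrap-around) is a copy of a window already present in one of the two verified cyclic patterns; consequently, concatenating $\alpha$ copies of the width-$4$ pattern and $\beta$ copies of the width-$5$ pattern yields a valid $5$-star coloring of $C_m\square C_{4\alpha+5\beta}$. By Theorem \ref{prime}, since $\gcd(4,5)=1$ and $(4-1)(5-1)=12$, every $n\ge 12$ is such a combination, while $n=8,9,10$ are covered by $8=4+4$, $9=4+5$, and $10=5+5$.

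The remaining widths are $n=6,7,11$ (the value $11$ being the Frobenius number of $\{4,5\}$, hence unattainable by gluing), which I would settle by giving explicit $5$-star colorings of $C_m\square C_6$, $C_m\square C_7$, and $C_m\square C_{11}$ for each $m\in\{6,8,9,10\}$. I expect the main work, and the only genuine obstacle, to be the construction and verification of these explicit tiles: for each pattern one must check that no two color classes induce a bicolored $P_4$, where the paths to be ruled out are not only the horizontal and vertical ones but also the bent (L- and S-shaped) $P_4$'s, and that these checks survive both the vertical wrap-around in $C_m$ and the horizontal wrap-around in $C_n$. This is a finite, if tedious, verification for each of the four values of $m$, and I would present the colorings as labelled figures in the manner of Figures $3$ and $4$; together with the lower bound from Theorem \ref{fertin}, this completes the proof.
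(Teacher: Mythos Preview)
Your plan is sound in principle, but it takes a much more laborious route than the paper, and it misses the structural reason why $m\in\{6,8,9,10\}$ is singled out for this theorem.

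The paper does not repeat the width-gluing argument at all. Instead it stacks in the \emph{row} direction, exploiting that each of these $m$ is a small multiple of $3$, $4$, or $5$: since $6=3+3$, $9=3+3+3$, $8=4+4$, $10=5+5$, one simply stacks two (or three) identical copies of an already-known $5$-star coloring of $C_3\square C_n$, $C_4\square C_n$, or $C_5\square C_n$ on top of one another. Every vertical seam between copies (and the final vertical wrap-around) is a copy of the wrap-around edge already present in the smaller torus, so no new bicolored $P_4$ can appear. This reduces the entire theorem to Theorems~\ref{3n}, \ref{4n}, \ref{5n}; the only residual cases are those $n$ not covered by Theorem~\ref{3n} (namely $n=3,5$ for $m=6,9$), and these are dispatched by commutativity using figures already drawn. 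No new explicit colorings are constructed anywhere.

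Your approach, by contrast, would require you to produce and verify on the order of twenty new explicit patterns (base blocks of widths $4$ and $5$ with matching prefixes, plus ad hoc colorings for $n=6,7,11$, for each of the four values of $m$). That is feasible but wasteful, and it obscures why these four values of $m$ form a natural group: they are exactly the integers in $\{6,\dots,11\}$ that factor through $3$, $4$, or $5$. The values $m=7$ and $m=11$ are prime and require genuinely new arguments, which is why the paper treats them in separate theorems.
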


\begin{proof} 
{By  Theorems \ref{fertin} and \ref{3n}, and using some suitable copies of $C_3\square C_n$, one can see that $\chi_s(C_6\square C_n)=\chi_s(C_9\square C_n)=5$, for every natural number $n$, $n\neq 3,5$. 
 Figure $2,\,(ii)$, Figure $4,\, (iii)$ and Figure $2,\,(iv)$ provide $5$-star colorings of $C_3\square C_6,\,C_5\square C_6$ and $C_3\square C_9$, respectively. Also, noting to Figure $4,\,(i)$ and $(ii)$, we obtain a $5$-star coloring of $C_5\square C_9$. Similarly,  by Theorems  \ref{fertin}, \ref{4n} and \ref{5n} and using two copies of $C_4\square C_n$ and two copies of $C_5\square C_n$, one can see that $\chi_s(C_8\square C_n)=\chi_s(C_{10}\square C_n)=5$ and the proof is complete.
 }
\end{proof}


\begin{thm}\label{7n}
{For every natural number $n\ge 3$, $\chi_s(C_7\square C_n)=5$.
}\end{thm}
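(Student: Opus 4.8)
The plan is to combine the commutativity of the Cartesian product with the cases already proved and then dispatch the genuinely new values of $n$ by a Sylvester-type tiling argument together with two explicit colorings. The lower bound $\chi_s(C_7\square C_n)\ge 5$ is immediate from Theorem \ref{fertin}, so throughout I only need to exhibit $5$-star colorings. First I would shrink the range of $n$ using $C_7\square C_n = C_n\square C_7$: the values $n=3,4,5$ follow from Theorems \ref{3n}, \ref{4n}, \ref{5n} applied to the second factor $C_7$ (note $7\ge 4$ and $7\ne 5$), and the values $n\in\{6,8,9,10\}$ follow from Theorem \ref{6n} with $m=n\in\{6,8,9,10\}$. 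Hence I may assume from now on that $n=7$ or $n\ge 11$.

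For $n\ge 12$ I would invoke Theorem \ref{prime} with the coprime pair $(r,s)=(4,5)$: since $(s-1)(r-1)=12$, every $n\ge 12$ can be written as $n=4\alpha+5\beta$ with $\alpha,\beta\ge 0$. The construction is to build two base colorings, a $C_7\square C_4$ tile and a $C_7\square C_5$ tile, each drawn as a $7$-row cylinder, designed so that each is a valid $5$-star coloring of its own torus \emph{and} their first three columns carry identical colors. Concatenating $\alpha$ copies of the width-$4$ tile and $\beta$ copies of the width-$5$ tile cyclically along the $n$-direction then produces a $5$-star coloring of $C_7\square C_n$. Correctness of the gluing rests on the fact that any (possibly bent) $P_4$ occupies at most four consecutive columns: a $P_4$ lying entirely inside one tile is controlled by that tile's validity, whereas a $P_4$ straddling a seam reaches at most three columns past the seam, and those three columns are precisely the common prefix, so the offending four-column window coincides with a window already occurring in the cyclic wrap of the tile on the near side of the seam. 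No bicolored $P_4$ is therefore created, and the final cyclic closure in the $n$-direction is itself a seam of this same type.

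There remain the two exceptional values $n=7$ and $n=11$, the latter being the Frobenius number of $\{4,5\}$ and hence not representable. For these I would supply explicit $5$-star colorings of $C_7\square C_7$ and $C_7\square C_{11}$ as figures (analogous to Figures $3$ and $4$) and verify directly that the union of any two color classes contains no bicolored $P_4$. Combined with the reductions above and the lower bound from Theorem \ref{fertin}, this exhausts all $n\ge 3$ and gives $\chi_s(C_7\square C_n)=5$.

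\textbf{Main obstacle.} The hard part will be the design and hand-verification of the two tileable base colorings. Because the vertical direction is the odd cycle $C_7$ and the shared-prefix requirement is rigid, a single pair of tiles must simultaneously respect $P_4$-freeness around each $C_7$ column, the horizontal and bent $P_4$ constraints across the width of a tile, and the matching-first-three-columns condition that renders every seam transparent. Exhibiting one such pair, together with the direct check of the $C_7\square C_7$ and $C_7\square C_{11}$ pictures, is the crux; by contrast the Sylvester bookkeeping and the symmetry reductions are entirely routine.
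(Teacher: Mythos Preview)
Your proposal is correct, but it takes a longer route than the paper. The paper also reduces to $n\ge 6$ via Theorems~\ref{3n}--\ref{5n}, but then chooses the coprime pair $(3,4)$ rather than $(4,5)$. Since $(3-1)(4-1)=6$, Sylvester covers \emph{every} $n\ge 6$ at once, so no residual explicit cases ($n=7$, $n=11$) are needed and Theorem~\ref{6n} is never invoked. Concretely, the paper exhibits a $C_7\square C_3$ tile and a $C_7\square C_4$ tile whose first \emph{two} columns agree (only two, not three), and notes that for these particular tiles the seam is still $P_4$-safe; a direct check on the one four-column window not automatically covered by two-column agreement suffices. What your approach buys is a cleaner, format-independent gluing lemma (three shared columns make every seam transparent without inspecting the tiles), at the cost of two extra hand-verified torus colorings and a larger Frobenius threshold; what the paper's approach buys is fewer pictures and no exceptional $n$, at the cost of a slightly ad hoc seam verification. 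Either way the lower bound comes from Theorem~\ref{fertin}, and both arguments are complete once the promised explicit colorings are produced and checked.
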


\begin{proof} 
{By Theorems \ref{3n}, \ref{4n} and \ref{5n},  one can assume that $n\ge 6$. Theorem \ref{prime} states that every $n \ge 6$ is a linear combination of $3$ and $4$ with non-negative integer coefficients. Now, although
the colors of the first two   columns of Figure $5,\,(i)$ and $(ii)$ are the same, by considering some suitable copies of Figure $5,\, (i)$ and $(ii)$, one may still obtain a $5$-star coloring of $C_7\square C_n$, for  $n\ge 6$. So,  using Theorem \ref{fertin} completes the proof.
\begin{center}
	\begin{tikzpicture}
	\begin{scope}[every node/.style={circle,thick,draw,scale=0.5}]
	\node (A) at (0,0) {\textbf{3}};
	\node (B) at (0,1) {\textbf{2}};
	\node (C) at (0,2) {\textbf{5}};
	\node (D) at (0,3) {\textbf{1}};
	\node (E) at (0,4) {\textbf{3}};
	\node (F) at (0,5) {\textbf{2}};
	\node (F1) at (0,6) {\textbf{4}};
	\node (G) at (1,0) {\textbf{5}};
	\node (H) at (1,1) {\textbf{1}};
	\node (I) at (1,2) {\textbf{4}};
	\node (J) at (1,3) {\textbf{5}};
	\node (K) at (1,4) {\textbf{2}};
	\node (L) at (1,5) {\textbf{1}};
	\node (L1) at (1,6) {\textbf{3}};
	\node (M) at (2,0) {\textbf{2}};
	\node (N) at (2,1) {\textbf{4}};
	\node (O) at (2,2) {\textbf{3}};
	\node (P) at (2,3) {\textbf{2}};
	\node (Q) at (2,4) {\textbf{4}};
	\node (R) at (2,5) {\textbf{5}};
	\node (R1) at (2,6) {\textbf{1}};
	\end{scope}
	
	\begin{scope}[>={[black]},
	every node/.style={fill=white,circle},
	every edge/.style={draw=black}]
	\path [->] (A) edge (B);
	\path [->] (B) edge (C);
	\path [->] (C) edge (D);
	\path [->] (D) edge (E);
	\path [->] (E) edge (F);
	\path [->] (F) edge (F1);
	\path [->] (G) edge (H);
	\path [->] (H) edge (I);
	\path [->] (I) edge (J);
	\path [->] (J) edge (K);
	\path [->] (K) edge (L);
	\path [->] (L) edge (L1);
	\path [->] (M) edge (N);
	\path [->] (N) edge (O);
	\path [->] (O) edge (P);
	\path [->] (P) edge (Q);
	\path [->] (Q) edge (R);
	\path [->] (R) edge (R1);
	\path [->] (A) edge (G);
	\path [->] (G) edge (M);
	\path [->] (B) edge (H);
	\path [->] (H) edge (N);
	\path [->] (C) edge (I);
	\path [->] (I) edge (O);
	\path [->] (D) edge (J);
	\path [->] (J) edge (P);
	\path [->] (E) edge (K);
	\path [->] (K) edge (Q);
	\path [->] (F) edge (L);
	\path [->] (L) edge (R);
	\path [->] (F1) edge (L1);
	\path [->] (L1) edge (R1);
	\path [->] (A) edge[bend left=20] (F1);
	\path [->] (G) edge[bend right=20] (L1);
	\path [->] (M) edge[bend right=20] (R1);
	\path [->] (A) edge[bend right=20] (M);
	\path [->] (B) edge[bend right=20] (N);
	\path [->] (C) edge[bend right=20] (O);
	\path [->] (D) edge[bend right=20] (P);
	\path [->] (E) edge[bend right=20] (Q);
	\path [->] (F) edge[bend right=20] (R);
	\path [->] (F1) edge[bend left=20] (R1);
	\end{scope}
	\end{tikzpicture}
	\hspace{.5cm}
	\begin{tikzpicture}
	\begin{scope}[every node/.style={circle,thick,draw,scale=0.5}]
	\node (A) at (0,0) {\textbf{3}};
	\node (B) at (0,1) {\textbf{2}};
	\node (C) at (0,2) {\textbf{5}};
	\node (D) at (0,3) {\textbf{1}};
	\node (E) at (0,4) {\textbf{3}};
	\node (F) at (0,5) {\textbf{2}};
	\node (F1) at (0,6) {\textbf{4}};
	\node (G) at (1,0) {\textbf{5}};
	\node (H) at (1,1) {\textbf{1}};
	\node (I) at (1,2) {\textbf{4}};
	\node (J) at (1,3) {\textbf{5}};
	\node (K) at (1,4) {\textbf{2}};
	\node (L) at (1,5) {\textbf{1}};
	\node (L1) at (1,6) {\textbf{3}};
	\node (M) at (2,0) {\textbf{3}};
	\node (N) at (2,1) {\textbf{4}};
	\node (O) at (2,2) {\textbf{2}};
	\node (P) at (2,3) {\textbf{3}};
	\node (Q) at (2,4) {\textbf{5}};
	\node (R) at (2,5) {\textbf{4}};
	\node (R1) at (2,6) {\textbf{2}};
	\node (S) at (3,0) {\textbf{1}};
	\node (T) at (3,1) {\textbf{5}};
	\node (U) at (3,2) {\textbf{3}};
	\node (V) at (3,3) {\textbf{4}};
	\node (W) at (3,4) {\textbf{1}};
	\node (X) at (3,5) {\textbf{5}};
	\node (X1) at (3,6) {\textbf{3}};
	\end{scope}
	
	\begin{scope}[>={[black]},
	every node/.style={fill=white,circle},
	every edge/.style={draw=black}]
	\path [->] (A) edge (B);
	\path [->] (B) edge (C);
	\path [->] (C) edge (D);
	\path [->] (D) edge (E);
	\path [->] (E) edge (F);
	\path [->] (F) edge (F1);
	\path [->] (G) edge (H);
	\path [->] (H) edge (I);
	\path [->] (I) edge (J);
	\path [->] (J) edge (K);
	\path [->] (K) edge (L);
	\path [->] (L) edge (L1);
	\path [->] (M) edge (N);
	\path [->] (N) edge (O);
	\path [->] (O) edge (P);
	\path [->] (P) edge (Q);
	\path [->] (Q) edge (R);
	\path [->] (R) edge (R1);
	\path [->] (S) edge (T);
	\path [->] (T) edge (U);
	\path [->] (U) edge (V);
	\path [->] (V) edge (W);
	\path [->] (W) edge (X);
	\path [->] (X) edge (X1);
	\path [->] (A) edge (G);
	\path [->] (G) edge (M);
	\path [->] (M) edge (S);
	\path [->] (B) edge (H);
	\path [->] (H) edge (N);
	\path [->] (N) edge (T);
	\path [->] (C) edge (I);
	\path [->] (I) edge (O);
	\path [->] (O) edge (U);
	\path [->] (D) edge (J);
	\path [->] (J) edge (P);
	\path [->] (P) edge (V);
	\path [->] (E) edge (K);
	\path [->] (K) edge (Q);
	\path [->] (Q) edge (W);
	\path [->] (F) edge (L);
	\path [->] (L) edge (R);
	\path [->] (R) edge (X);
	\path [->] (F1) edge (L1);
	\path [->] (L1) edge (R1);
	\path [->] (R1) edge (X1);
	\path [->] (A) edge[bend left=20] (F1);
	\path [->] (G) edge[bend right=20] (L1);
	\path [->] (M) edge[bend right=20] (R1);
	\path [->] (S) edge[bend right=20] (X1);
	\path [->] (A) edge[bend right=20] (S);
	\path [->] (B) edge[bend right=20] (T);
	\path [->] (C) edge[bend right=20] (U);
	\path [->] (D) edge[bend right=20] (V);
	\path [->] (E) edge[bend right=20] (W);
	\path [->] (F) edge[bend right=20] (X);
	\path [->] (F1) edge[bend left=20] (X1);
	\end{scope}
	\end{tikzpicture}
\end{center}

\vspace{-.5cm}
\hspace*{4cm}$(i)$ \hspace*{4.3cm}$(ii)$
\begin{center}
Figure $5$:  $5$-star colorings of $C_7\square C_n$, for $n=3,4$
\end{center}

}  
\end{proof}


\begin{thm}\label{11n}
{For every natural number $n\ge 3$, $\chi_s(C_{11}\square C_n)=5$.
}\end{thm}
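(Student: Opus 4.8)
My plan is to reuse, almost verbatim, the scheme of Theorems \ref{3n}--\ref{7n}: the lower bound is free, the small values of $n$ are absorbed by the results already proved, and every large $n$ is handled by tiling two explicit periodic colourings whose widths are coprime. Since $C_{11}\square C_n\cong C_n\square C_{11}$, Theorem \ref{fertin} immediately gives $\chi_s(C_{11}\square C_n)\ge 5$ for all $n\ge 3$, so only $\chi_s(C_{11}\square C_n)\le 5$ needs proof. I would first clear the base cases by commutativity: $\chi_s(C_{11}\square C_3)=\chi_s(C_3\square C_{11})=5$ by Theorem \ref{3n} (here $11\ge 4$ and $11\neq 5$), $\chi_s(C_{11}\square C_4)=5$ by Theorem \ref{4n}, and $\chi_s(C_{11}\square C_5)=5$ by Theorem \ref{5n}. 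Thus it remains to treat $n\ge 6$.

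For the generic range I would take the two block-widths $3$ and $4$, as in Theorem \ref{7n}. As $\gcd(3,4)=1$, Theorem \ref{prime} guarantees that every $n\ge (3-1)(4-1)=6$ can be written as $n=3a+4b$ with $a,b$ non-negative integers. The substance of the proof is then to display two $5$-star colourings, one of the torus $C_{11}\square C_3$ and one of $C_{11}\square C_4$ (a prospective ``Figure $6$'' playing the role Figure $5$ plays for $C_7$), whose first three columns coincide, each column being a fixed proper colouring of the cycle $C_{11}$ by the five colours. Concatenating $a$ copies of the width-$3$ block and $b$ copies of the width-$4$ block around the $n$-cycle produces a candidate colouring of $C_{11}\square C_n$.

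It remains to check that this candidate is a genuine star colouring. Inside one block, and across the cyclic wrap of one block, there is nothing to prove, since each block is a valid $5$-star colouring of its own torus. A $P_4$ in a product of two cycles has at most three horizontal edges, hence meets at most four consecutive columns; because the smaller block has width $3$, the gap between two consecutive seams is at least $3$, so no $P_4$ can straddle two seams. Therefore only $P_4$'s meeting a single seam have to be controlled, and these span at most four columns, reaching at most the third column of the incoming block. Since every block starts with the same three columns, each such window consists of colourings that already occur consecutively (cyclically) inside one of the two blocks; the bicoloured-$P_4$ equalities it would demand are thus either already excluded by the validity of that block or force two adjacent columns to receive identical colours, contradicting properness. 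In practice this is a finite inspection of the two displayed pictures, performed exactly as in the proofs of Theorems \ref{3n} and \ref{7n}.

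The main obstacle is the explicit construction, not the bookkeeping: one must exhibit $5$-colourings of the odd cycle $C_{11}$ for each column so that (i) every column is itself star-coloured, (ii) horizontally adjacent columns create no bicoloured $P_4$, and, crucially, (iii) the width-$4$ block arises from the width-$3$ block by splicing in a single admissible column while keeping the first three columns unchanged. Producing this extra column $C_4$ that simultaneously closes the width-$4$ torus and respects the shared prefix is the delicate step; once a suitable pair of colourings is found and checked, Theorem \ref{fertin} supplies the matching lower bound and the proof is complete.
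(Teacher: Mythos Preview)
Your overall plan—lower bound from Theorem~\ref{fertin}, small $n$ absorbed by earlier theorems, large $n$ via Sylvester plus concatenation of compatible periodic blocks—is exactly the template the paper uses, but the implementation differs in a way that matters. You propose width-$3$ and width-$4$ blocks (as for $C_7$), which obliges you to exhibit two new $5$-star colourings of $C_{11}\square C_3$ and $C_{11}\square C_4$ sharing their initial columns; this is precisely the ``main obstacle'' you flag and leave unresolved, and it is the entire content of the proof. The paper avoids that construction: it first invokes Theorems~\ref{3n}--\ref{7n} (not just \ref{3n}--\ref{5n}) to dispose of all $n\le 10$, and for $n\ge 12$ it uses width-$4$ and width-$5$ blocks, which are \emph{already on hand} as the transposes of Figures~$3\,(v)$ and~$4\,(v)$; those two figures were built to share their first three rows, so no new compatible pair has to be manufactured. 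The cost is that $4$ and $5$ only reach $n\ge 12$, so the single leftover case $n=11$ receives its own explicit $11\times 11$ colouring (the paper's Figure~$6$). In short, you would trade one large ad~hoc picture for two smaller ones you have not supplied, whereas the paper trades the reuse of existing figures for that one extra picture. One minor correction: for a width-$3$ block the seam argument only needs the first \emph{two} columns to coincide (compare the statement in Theorem~\ref{7n}), not three; requiring all three columns of the $3$-block to match the first three of the $4$-block is stronger than necessary.
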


\begin{proof} 
{Clearly, by Theorems \ref{3n},$\ldots$,\ref{7n},  one can assume that $n\ge 11$. Theorem \ref{prime} states that every $n \ge 12$ is a linear combination of $4$ and $5$ with non-negative integer coefficients. Now, since 
the colors of the first three columns of Figures $3,\,(v)$ and $4,\,(v)$ are the same, by considering some suitable copies of Figures $3,\, (v)$ and $4\, (v)$, one may obtain a $5$-star coloring of $C_{11}\square C_n$, for $n\ge 12$. For $n=11$, consider the following $5$-star coloring. Now, by Theorem \ref{fertin}, the proof is complete.

\begin{center}
	\begin{tikzpicture}
	\begin{scope}[every node/.style={circle,thick,draw,scale=0.5}]
	\node (A) at (0,0) {\textbf{3}};
	\node (B) at (0,1) {\textbf{2}};
	\node (C) at (0,2) {\textbf{4}};
	\node (D) at (0,3) {\textbf{5}};
	\node (E) at (0,4) {\textbf{1}};
	\node (F) at (0,5) {\textbf{2}};
	\node (F1) at (0,6) {\textbf{4}};
	\node (F2) at (0,7) {\textbf{3}};
	\node (FF3) at (0,8) {\textbf{2}};
	\node (FF4) at (0,9) {\textbf{4}};
	\node (FF5) at (0,10) {\textbf{5}};
	\node (G) at (1,0) {\textbf{4}};
	\node (H) at (1,1) {\textbf{5}};
	\node (I) at (1,2) {\textbf{2}};
	\node (J) at (1,3) {\textbf{1}};
	\node (K) at (1,4) {\textbf{3}};
	\node (L) at (1,5) {\textbf{5}};
	\node (L1) at (1,6) {\textbf{1}};
	\node (L2) at (1,7) {\textbf{4}};
	\node (LL3) at (1,8) {\textbf{5}};
	\node (LL4) at (1,9) {\textbf{2}};
	\node (LL5) at (1,10) {\textbf{1}};
	\node (M) at (2,0) {\textbf{5}};
	\node (N) at (2,1) {\textbf{1}};
	\node (O) at (2,2) {\textbf{3}};
	\node (P) at (2,3) {\textbf{2}};
	\node (Q) at (2,4) {\textbf{4}};
	\node (R) at (2,5) {\textbf{3}};
	\node (R1) at (2,6) {\textbf{2}};
	\node (R2) at (2,7) {\textbf{5}};
	\node (R3) at (2,8) {\textbf{1}};
	\node (R4) at (2,9) {\textbf{3}};
	\node (R5) at (2,10) {\textbf{2}};
	\node (S) at (3,0) {\textbf{3}};
	\node (T) at (3,1) {\textbf{4}};
	\node (U) at (3,2) {\textbf{1}};
	\node (V) at (3,3) {\textbf{5}};
	\node (W) at (3,4) {\textbf{3}};
	\node (X) at (3,5) {\textbf{1}};
	\node (X1) at (3,6) {\textbf{5}};
	\node (X2) at (3,7) {\textbf{3}};
	\node (X3) at (3,8) {\textbf{4}};
	\node (X4) at (3,9) {\textbf{1}};
	\node (X5) at (3,10) {\textbf{5}};
	\node (Y) at (4,0) {\textbf{2}};
	\node (Z) at (4,1) {\textbf{3}};
	\node (AA) at (4,2) {\textbf{5}};
	\node (BB) at (4,3) {\textbf{4}};
	\node (CC) at (4,4) {\textbf{2}};
	\node (DD) at (4,5) {\textbf{3}};
	\node (D1) at (4,6) {\textbf{4}};
	\node (D2) at (4,7) {\textbf{2}};
	\node (D3) at (4,8) {\textbf{3}};
	\node (D4) at (4,9) {\textbf{5}};
	\node (D5) at (4,10) {\textbf{4}};
	\node (EE) at (5,0) {\textbf{4}};
	\node (FF) at (5,1) {\textbf{5}};
	\node (GG) at (5,2) {\textbf{2}};
	\node (HH) at (5,3) {\textbf{1}};
	\node (II) at (5,4) {\textbf{3}};
	\node (JJ) at (5,5) {\textbf{5}};
	\node (J1) at (5,6) {\textbf{1}};
	\node (J2) at (5,7) {\textbf{4}};
	\node (JJ3) at (5,8) {\textbf{5}};
	\node (JJ4) at (5,9) {\textbf{2}};
	\node (JJ5) at (5,10) {\textbf{1}};
	\node (E3) at (6,0) {\textbf{5}};
	\node (F3) at (6,1) {\textbf{1}};
	\node (G3) at (6,2) {\textbf{3}};
	\node (H3) at (6,3) {\textbf{2}};
	\node (I3) at (6,4) {\textbf{4}};
	\node (J3) at (6,5) {\textbf{3}};
	\node (K3) at (6,6) {\textbf{2}};
	\node (L3) at (6,7) {\textbf{5}};
	\node (MM3) at (6,8) {\textbf{1}};
	\node (MM4) at (6,9) {\textbf{3}};
	\node (MM5) at (6,10) {\textbf{2}};
	\node (E4) at (7,0) {\textbf{3}};
	\node (F4) at (7,1) {\textbf{4}};
	\node (G4) at (7,2) {\textbf{1}};
	\node (H4) at (7,3) {\textbf{5}};
	\node (I4) at (7,4) {\textbf{3}};
	\node (J4) at (7,5) {\textbf{1}};
	\node (K4) at (7,6) {\textbf{5}};
	\node (L4) at (7,7) {\textbf{3}};
	\node (M4) at (7,8) {\textbf{4}};
	\node (N4) at (7,9) {\textbf{1}};
	\node (O4) at (7,10) {\textbf{5}};
	\node (AB) at (8,0) {\textbf{2}};
	\node (CD) at (8,1) {\textbf{3}};
	\node (EF) at (8,2) {\textbf{5}};
	\node (GH) at (8,3) {\textbf{4}};
	\node (IJ) at (8,4) {\textbf{2}};
	\node (KL) at (8,5) {\textbf{3}};
	\node (MN) at (8,6) {\textbf{4}};
	\node (OP) at (8,7) {\textbf{2}};
	\node (QR) at (8,8) {\textbf{3}};
	\node (ST) at (8,9) {\textbf{5}};
	\node (UV) at (8,10) {\textbf{4}};
	\node (WX) at (9,0) {\textbf{4}};
	\node (YZ) at (9,1) {\textbf{5}};
	\node (BA) at (9,2) {\textbf{2}};
	\node (DC) at (9,3) {\textbf{1}};
	\node (FE) at (9,4) {\textbf{3}};
	\node (HG) at (9,5) {\textbf{5}};
	\node (JI) at (9,6) {\textbf{1}};
	\node (LK) at (9,7) {\textbf{4}};
	\node (NM) at (9,8) {\textbf{5}};
	\node (PO) at (9,9) {\textbf{2}};
	\node (RQ) at (9,10) {\textbf{1}};
	\node (TS) at (10,0) {\textbf{5}};
	\node (VU) at (10,1) {\textbf{1}};
	\node (XW) at (10,2) {\textbf{3}};
	\node (ZY) at (10,3) {\textbf{2}};
	\node (AAA) at (10,4) {\textbf{4}};
	\node (BBB) at (10,5) {\textbf{3}};
	\node (CCC) at (10,6) {\textbf{2}};
	\node (DDD) at (10,7) {\textbf{5}};
	\node (EEE) at (10,8) {\textbf{1}};
	\node (FFF) at (10,9) {\textbf{3}};
	\node (GGG) at (10,10) {\textbf{2}};
	\end{scope}
	
	\begin{scope}[>={[black]},
	every node/.style={fill=white,circle},
	every edge/.style={draw=black}]
	\path [->] (A) edge (B);
	\path [->] (B) edge (C);
	\path [->] (C) edge (D);
	\path [->] (D) edge (E);
	\path [->] (E) edge (F);
	\path [->] (F) edge (F1);
	\path [->] (F1) edge (F2);
	\path [->] (F2) edge (FF3);
	\path [->] (FF3) edge (FF4);
	\path [->] (FF4) edge (FF5);
	\path [->] (G) edge (H);
	\path [->] (H) edge (I);
	\path [->] (I) edge (J);
	\path [->] (J) edge (K);
	\path [->] (K) edge (L);
	\path [->] (L) edge (L1);
	\path [->] (L1) edge (L2);
	\path [->] (L2) edge (LL3);
	\path [->] (LL3) edge (LL4);
	\path [->] (LL4) edge (LL5);
	\path [->] (M) edge (N);
	\path [->] (N) edge (O);
	\path [->] (O) edge (P);
	\path [->] (P) edge (Q);
	\path [->] (Q) edge (R);
	\path [->] (R) edge (R1);
	\path [->] (R1) edge (R2);
	\path [->] (S) edge (T);
	\path [->] (T) edge (U);
	\path [->] (U) edge (V);
	\path [->] (V) edge (W);
	\path [->] (W) edge (X);
	\path [->] (X) edge (X1);
	\path [->] (X1) edge (X2);
	\path [->] (Y) edge (Z);
	\path [->] (Z) edge (AA);
	\path [->] (AA) edge (BB);
	\path [->] (BB) edge (CC);
	\path [->] (CC) edge (DD);
	\path [->] (DD) edge (D1);
	\path [->] (D1) edge (D2);
	\path [->] (EE) edge (FF);
	\path [->] (FF) edge (GG);
	\path [->] (GG) edge (HH);
	\path [->] (HH) edge (II);
	\path [->] (II) edge (JJ);
	\path [->] (JJ) edge (J1);
	\path [->] (J1) edge (J2);
	\path [->] (F) edge (L);
	\path [->] (L) edge (R);
	\path [->] (R) edge (X);
	\path [->] (X) edge (DD);
	\path [->] (DD) edge (JJ);
	\path [->] (E3) edge (F3);
	\path [->] (F3) edge (G3);
	\path [->] (G3) edge (H3);
	\path [->] (H3) edge (I3);
	\path [->] (I3) edge (J3);
	\path [->] (J3) edge (K3);
	\path [->] (K3) edge (L3);
	\path [->] (E4) edge (F4);
	\path [->] (F4) edge (G4);
	\path [->] (G4) edge (H4);
	\path [->] (H4) edge (I4);
	\path [->] (I4) edge (J4);
	\path [->] (J4) edge (K4);
	\path [->] (K4) edge (L4);
	\path [->] (E) edge (K);
	\path [->] (K) edge (Q);
	\path [->] (Q) edge (W);
	\path [->] (W) edge (CC);
	\path [->] (CC) edge (II);
	\path [->] (D) edge (J);
	\path [->] (J) edge (P);
	\path [->] (P) edge (V);
	\path [->] (V) edge (BB);
	\path [->] (BB) edge (HH);
	\path [->] (C) edge (I);
	\path [->] (I) edge (O);
	\path [->] (O) edge (U);
	\path [->] (U) edge (AA);
	\path [->] (AA) edge (GG);
	\path [->] (B) edge (H);
	\path [->] (H) edge (N);
	\path [->] (N) edge (T);
	\path [->] (T) edge (Z);
	\path [->] (Z) edge (FF);
	\path [->] (A) edge (G);
	\path [->] (G) edge (M);
	\path [->] (M) edge (S);
	\path [->] (S) edge (Y);
	\path [->] (Y) edge (EE);
	\path [->] (F1) edge (L1);
	\path [->] (L1) edge (R1);
	\path [->] (R1) edge (X1);
	\path [->] (X1) edge (D1);
	\path [->] (D1) edge (J1);
	\path [->] (J1) edge (K3);
	\path [->] (K3) edge (K4);
	\path [->] (F2) edge (L2);
	\path [->] (L2) edge (R2);
	\path [->] (R2) edge (X2);
	\path [->] (X2) edge (D2);
	\path [->] (D2) edge (J2);
	\path [->] (J2) edge (L3);
	\path [->] (L3) edge (L4);
	\path [->] (EE) edge (E3);
	\path [->] (E3) edge (E4);
	\path [->] (FF) edge (F3);
	\path [->] (F3) edge (F4);
	\path [->] (GG) edge (G3);
	\path [->] (G3) edge (G4);
	\path [->] (HH) edge (H3);
	\path [->] (H3) edge (H4);
	\path [->] (II) edge (I3);
	\path [->] (I3) edge (I4);
	\path [->] (JJ) edge (J3);
	\path [->] (J3) edge (J4);
	\path [->] (R2) edge (R3);
	\path [->] (R3) edge (R4);
	\path [->] (R4) edge (R5);
	\path [->] (X2) edge (X3);
	\path [->] (X3) edge (X4);
	\path [->] (X4) edge (X5);
	\path [->] (D2) edge (D3);
	\path [->] (D3) edge (D4);
	\path [->] (D4) edge (D5);
	\path [->] (J2) edge (JJ3);
	\path [->] (JJ3) edge (JJ4);
	\path [->] (JJ4) edge (JJ5);
	\path [->] (L3) edge (MM3);
	\path [->] (MM3) edge (MM4);
	\path [->] (MM4) edge (MM5);
	\path [->] (L4) edge (M4);
	\path [->] (M4) edge (N4);
	\path [->] (N4) edge (O4);
	\path [->] (OP) edge (QR);
	\path [->] (QR) edge (ST);
	\path [->] (ST) edge (UV);
	\path [->] (LK) edge (NM);
	\path [->] (NM) edge (PO);
	\path [->] (PO) edge (RQ);
	\path [->] (DDD) edge (EEE);
	\path [->] (EEE) edge (FFF);
	\path [->] (FFF) edge (GGG);
	\path [->] (AB) edge (CD);
	\path [->] (CD) edge (EF);
	\path [->] (EF) edge (GH);
	\path [->] (GH) edge (IJ);
	\path [->] (IJ) edge (KL);
	\path [->] (KL) edge (MN);
	\path [->] (OP) edge (MN);
	\path [->] (WX) edge (YZ);
	\path [->] (YZ) edge (BA);
	\path [->] (BA) edge (DC);
	\path [->] (DC) edge (FE);
	\path [->] (FE) edge (HG);
	\path [->] (HG) edge (JI);
	\path [->] (JI) edge (LK);
	\path [->] (TS) edge (VU);
	\path [->] (VU) edge (XW);
	\path [->] (XW) edge (ZY);
	\path [->] (ZY) edge (AAA);
	\path [->] (AAA) edge (BBB);
	\path [->] (BBB) edge (CCC);
	\path [->] (CCC) edge (DDD);
	\path [->] (FF3) edge (LL3);
	\path [->] (LL3) edge (R3);
	\path [->] (R3) edge (X3);
	\path [->] (X3) edge (D3);
	\path [->] (D3) edge (JJ3);
	\path [->] (JJ3) edge (MM3);
	\path [->] (MM3) edge (M4);
	\path [->] (M4) edge (QR);
	\path [->] (QR) edge (NM);
	\path [->] (NM) edge (EEE);
	\path [->] (FF4) edge (LL4);
	\path [->] (LL4) edge (R4);
	\path [->] (R4) edge (X4);
	\path [->] (X4) edge (D4);
	\path [->] (D4) edge (JJ4);
	\path [->] (JJ4) edge (MM4);
	\path [->] (MM4) edge (N4);
	\path [->] (N4) edge (ST);
	\path [->] (ST) edge (PO);
	\path [->] (PO) edge (FFF);
	\path [->] (FF5) edge (LL5);
	\path [->] (LL5) edge (R5);
	\path [->] (R5) edge (X5);
	\path [->] (X5) edge (D5);
	\path [->] (D5) edge (JJ5);
	\path [->] (JJ5) edge (MM5);
	\path [->] (MM5) edge (O4);
	\path [->] (O4) edge (UV);
	\path [->] (UV) edge (RQ);
	\path [->] (RQ) edge (GGG);
	\path [->] (L4) edge (OP);
	\path [->] (OP) edge (LK);
	\path [->] (LK) edge (DDD);
	\path [->] (K4) edge (MN);
	\path [->] (MN) edge (JI);
	\path [->] (JI) edge (CCC);
	\path [->] (J4) edge (KL);
	\path [->] (I4) edge (IJ);
	\path [->] (H4) edge (GH);
	\path [->] (G4) edge (EF);
	\path [->] (F4) edge (CD);
	\path [->] (E4) edge (AB);
	\path [->] (AB) edge (WX);
	\path [->] (CD) edge (YZ);
	\path [->] (EF) edge (BA);
	\path [->] (GH) edge (DC);
	\path [->] (IJ) edge (FE);
	\path [->] (KL) edge (HG);
	\path [->] (WX) edge (TS);
	\path [->] (YZ) edge (VU);
	\path [->] (BA) edge (XW);
	\path [->] (DC) edge (ZY);
	\path [->] (FE) edge (AAA);
	\path [->] (HG) edge (BBB);
	\path [->] (A) edge[bend left=10] (FF5);
	\path [->] (G) edge[bend right=10] (LL5);
	\path [->] (M) edge[bend right=10] (R5);
	\path [->] (S) edge[bend right=10] (X5);
	\path [->] (Y) edge[bend right=10] (D5);
	\path [->] (EE) edge[bend right=10] (JJ5);
	\path [->] (E3) edge[bend right=10] (MM5);
	\path [->] (E4) edge[bend right=10] (O4);
	\path [->] (AB) edge[bend right=10] (UV);
	\path [->] (WX) edge[bend right=10] (RQ);
	\path [->] (TS) edge[bend right=10] (GGG);
	\path [->] (A) edge[bend right=10] (TS);
	\path [->] (B) edge[bend right=10] (VU);
	\path [->] (C) edge[bend right=10] (XW);
	\path [->] (D) edge[bend right=10] (ZY);
	\path [->] (E) edge[bend right=10] (AAA);
	\path [->] (F) edge[bend right=10] (BBB);
	\path [->] (F1) edge[bend right=10] (CCC);
	\path [->] (F2) edge[bend right=10] (DDD);
	\path [->] (FF3) edge[bend right=10] (EEE);
	\path [->] (FF4) edge[bend right=10] (FFF);
	\path [->] (FF5) edge[bend left=10] (GGG);
	\end{scope}
	\end{tikzpicture}

\end{center}

\begin{center}
Figure $6$: A $5$-star coloring of $C_{11}\square C_{11}$
\end{center}

}  
\end{proof}


We close the paper with the following  theorem. 

\begin{thm}\label{mn}
{For natural numbers $m,n\ge 3$, $\chi_s(C_m\square C_n)=5$, except $C_3\square C_3$ and $C_3\square C_5$.
}\end{thm}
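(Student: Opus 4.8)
The plan is to assemble Theorem \ref{mn} from the special cases already in hand, collapsing everything onto a single gluing construction. First I would use the isomorphism $C_m \square C_n \cong C_n \square C_m$ to assume $m \le n$, and recall that Theorem \ref{fertin} supplies the lower bound $\chi_s(C_m \square C_n) \ge 5$ for all $m,n \ge 3$; so for every non-exceptional pair it suffices to exhibit a $5$-star coloring. Theorems \ref{3n}, \ref{4n}, \ref{5n}, \ref{6n}, \ref{7n} and \ref{11n} dispose of every pair with $m \in \{3,4,5,6,7,8,9,10,11\}$, hence (after the swap) of every pair with $\min\{m,n\} \le 11$; the only omissions are the two genuine exceptions $C_3 \square C_3$ and $C_3 \square C_5$, whose star chromatic number is $6$ by \cite{2016} and by the opening theorem of this paper, respectively. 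Thus the whole remaining content is the range $\min\{m,n\} \ge 12$.

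For that range I would fix $n \ge 12$ and apply Theorem \ref{prime} with $(r,s)=(4,5)$: since $(5-1)(4-1)=12$, every $m \ge 12$ admits a representation $m = 4a + 5b$ with $a,b \ge 0$. The idea is to build a $5$-star coloring of $C_m \square C_n$ by concatenating, in the cyclic vertical ($C_m$) direction, $a$ copies of a $5$-star coloring of $C_4 \square C_n$ and $b$ copies of a $5$-star coloring of $C_5 \square C_n$; such strip colorings exist for every $n$ by Theorems \ref{4n} and \ref{5n}. This is precisely the stacking device already used in Theorem \ref{6n}, where $C_8$ and $C_{10}$ were obtained by repeating a single $C_4$- or $C_5$-strip, now carried out with \emph{mixed} block heights. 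Mixing is unavoidable: heights such as $m=13$ or $m=26$ have no divisor in $\{3,\dots,11\}$, so no coloring can be produced by repeating one strip, and one is forced to combine a $4$-strip with a $5$-strip.

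The step I expect to be the crux is the seam compatibility of the two strip types. For the vertical concatenation to stay proper and free of a bicolored $P_4$ at each junction, and across the final wraparound that closes up $C_m$, the chosen colorings of $C_4 \square C_n$ and $C_5 \square C_n$ must coincide on a common band of consecutive boundary rows, so that the bottom rows of one strip reproduce the top rows of the other. One checks that the explicit colorings in Figures $3$ and $4$ are arranged to guarantee exactly this: their extreme rows agree. Once it holds, every junction between successive blocks is locally identical to an interior band of a single toroidal strip that is already known to be star-colored, so no new bicolored $P_4$ --- horizontal, vertical, or bent --- can be created at a seam. Verifying this is a finite local computation: inspect all $P_4$'s meeting the rows on either side of a seam.

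Finally I would package the two ranges together: the lower bound from Theorem \ref{fertin}; the small-height cases $\min\{m,n\} \le 11$ from Theorems \ref{3n}--\ref{11n} via commutativity; and the stacking construction for $\min\{m,n\} \ge 12$, with $C_3 \square C_3$ and $C_3 \square C_5$ recorded as the sole pairs where $\chi_s = 6$. The main risk in the plan is bookkeeping rather than conceptual: one must ensure the boundary bands of the $C_4$- and $C_5$-strips can be matched simultaneously for \emph{every} $n \ge 12$, not merely for one convenient $n$, which is why it is cleanest to fix $n$ first and glue in the $m$-direction using colorings whose horizontal splicing (as in the proofs of Theorems \ref{4n} and \ref{5n}) preserves the shared boundary rows.
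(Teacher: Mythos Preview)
Your proposal is correct and follows essentially the same route as the paper: reduce to $\min\{m,n\}\ge 12$ via Theorems \ref{3n}--\ref{11n}, then use Theorem \ref{prime} with $(r,s)=(4,5)$ and glue the explicit blocks from Figures~3(i),(ii) and~4(i),(ii), with Theorem \ref{fertin} supplying the lower bound. The only cosmetic difference is that the paper phrases the gluing as a direct two-dimensional tiling by the four $\{4,5\}\times\{4,5\}$ blocks, whereas you first assemble full strips $C_4\square C_n$ and $C_5\square C_n$ (via Theorems \ref{4n} and \ref{5n}) and then stack those; since those strips are themselves built from the same four blocks, the constructions coincide, and your explicit emphasis on checking seam compatibility of the boundary rows is exactly the verification the paper leaves implicit.
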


\begin{proof} 
{If $3\le m,n \le 11$, then by Theorems \ref{3n},$\ldots$,\ref{11n}, we are done. Now, by Theorem \ref{prime}, every $n \ge 12$ is a linear combination of $4$ and $5$ with non-negative integer coefficients. Since 
the colors of the first three columns of Figure $3,\,(i)$ and $(ii)$ and also  the colors of the first three columns of Figure $4,\,(i)$ and $(ii)$  are the same, by considering some suitable copies of 
these four graphs and using Theorem \ref{fertin}, the proof is complete. }  
\end{proof}

\bibliographystyle{amsplain}
%

\end{document}